\documentclass[11pt,letterpaper]{article}

\usepackage[T1]{fontenc}
\usepackage{lmodern}

\usepackage[utf8]{inputenc} 
\usepackage[T1]{fontenc}    
\usepackage[hidelinks]{hyperref}       
\usepackage{url}            
\usepackage{booktabs}       
\usepackage{amsfonts}       
\usepackage{nicefrac}       
\usepackage{microtype}      

\usepackage{amsthm}
\usepackage{amsmath}
\usepackage{algcompatible}
\usepackage{algorithm}

\usepackage[noend]{algpseudocode}

\usepackage{booktabs}
\usepackage{enumerate}
\usepackage{graphicx}
\usepackage{amssymb}
\usepackage{latexsym}
\usepackage{epstopdf}
\usepackage{color}
\usepackage{xcolor}
\usepackage{bbm}
\usepackage{needspace}
\usepackage{color, colortbl}
\usepackage[english]{babel}

\usepackage{caption}
\usepackage{subcaption}
\captionsetup{font=small}
\captionsetup[sub]{font=footnotesize}

\usepackage{filecontents}
\usepackage{mathtools}
\usepackage{multirow}

\usepackage[margin=1.0in]{geometry}

\headheight     0.0in

\newtheorem{theorem}{Theorem}

\newtheorem{lemma}{Lemma}
\newtheorem{definition}{Definition}
\newtheorem{remark}{Remark}
\newtheorem{assumption}{Assumption}
\newtheorem{problem}{Problem}

\newcommand{\tp}{\top}
\newcommand{\Proj}{\text{Proj}}
\newcommand{\tr}{\textbf{tr}}
\newcommand{\gr}{\text{gr}}
\newcommand{\dist}{\text{dist}}

\title{{\LARGE \bf Learning Lyapunov Functions for Piecewise Affine Systems with Neural Network Controllers}}
\author{Shaoru Chen, Mahyar Fazlyab, Manfred Morari, George J. Pappas, Victor M. Preciado
	\thanks{The authors are with the Department of Electrical and Systems Engineering, University of Pennsylvania. Email: \{srchen, mahyarfa, morari, pappasg, preciado\}@seas.upenn.edu.}} 

\date{}

\begin{document}
\pagestyle{plain}
\maketitle

\begin{abstract}

We propose a learning-based method for Lyapunov stability analysis of piecewise affine dynamical systems in feedback with piecewise affine neural network controllers. The proposed method consists of an iterative interaction between a learner and a verifier, where in each iteration, the learner uses a collection of samples of the closed-loop system to propose a Lyapunov function candidate as the solution to a convex program. The learner then queries the verifier, which solves a mixed-integer program to either validate the proposed Lyapunov function candidate or reject it with a counterexample, i.e., a state where the stability condition fails. This counterexample is then added to the sample set of the learner to refine the set of Lyapunov function candidates. We design the learner and the verifier based on the analytic center cutting-plane method, in which the verifier acts as the cutting-plane oracle to refine the set of Lyapunov function candidates. We show that when the set of Lyapunov functions is full-dimensional in the parameter space, the overall procedure finds a Lyapunov function in a finite number of iterations. We demonstrate the utility of the proposed method in searching for quadratic and piecewise quadratic Lyapunov functions.

\end{abstract}


\section{Introduction}
Deep neural networks (DNNs) have achieved remarkable success in various challenging tasks across different fields such as image classification~\cite{krizhevsky2012imagenet}, machine translation~\cite{sutskever2014sequence}, speech recognition~\cite{hinton2012deep}, reinforcement learning, and control~\cite{mnih2015human}. The high performance of DNNs, however, does not come with guarantees as they are highly complex and, therefore, hard to reason about. Moreover, it is relatively well-known that neural networks can be sensitive to input perturbations and adversarial attacks~\cite{kurakin2016adversarial, papernot2016limitations}. These drawbacks limit the application of DNNs in safety-critical systems, in which standard control strategies, although potentially inferior to DNNs in performance, do have performance guarantees. Therefore, it is critical to develop tools that can provide useful certificates of stability, safety, and robustness for DNN-driven systems. 

Stability analysis aims to show that a set of initial states of a dynamical system would stay around and potentially converge to an equilibrium point. 
Stability analysis of dynamical systems often relies on constructing Lyapunov functions that can find positive invariant sets, certify the stability of equilibrium points, and estimate their region of attraction. 
Finding non-conservative Lyapunov functions for generic nonlinear systems, however, is not an easy task and often requires substantial expertise, manual effort, and expensive numerics in the form of complex optimization problems. 
%
%
This task becomes even more challenging when neural networks (NNs) are used as stabilizing controllers. In this context, it is of great interest to automate the construction of Lyapunov functions for various classes of nonlinear systems \cite{abate2020formal} and, in particular, neural-network-controlled feedback systems. 

The complexity of finding Lyapunov functions for nonlinear systems has also motivated the use of data-driven methods, which aim at ``learning'' Lyapunov functions from observations or simulation traces. The intuition is that a Lyapunov function constructed from finitely many trajectories would hopefully be a valid Lyapunov function for \emph{all} possible trajectories. Although these methods do not require a model of the system to be available, they generally rely on a large number of simulations and often lack formal guarantees  despite the high confidence gained from many simulations.

\medskip

\textit{Contributions}: We propose a sample-efficient optimization-based method  to synthesize Lyapunov functions for discrete-time piecewise affine systems in feedback with ReLU neural network controllers. The proposed algorithm relies on a ``learner'', which proposes a Lyapunov function candidate using finitely many samples, and a ``verifier'', which either validates the Lyapunov function candidate or rejects it with a counterexample, i.e., a state where the stability condition fails. This counterexample is then added to the sample set of the learner to refine the set of Lyapunov function candidates. We design the learner and the verifier according to the analytic center cutting-plane method (ACCPM) from convex optimization, with which the algorithm can efficiently and exhaustively search over the considered Lyapunov function candidate class. Notably, the algorithm is guaranteed to terminate in finite steps if the set of Lyapunov function is full-dimensional in the parameter space. We demonstrate the application of our algorithm to search for quadratic and piecewise quadratic Lyapunov functions.

\subsection{Related work}

\emph{Output range analysis of NNs}: Driven by the need to verify the robustness of deep neural networks against adversarial attacks~\cite{kurakin2016adversarial}, a large body of work has been reported on bounding the output of DNNs for a given range of inputs~\cite{dutta2017output, lomuscio2017approach, wong2018provable, fazlyab2019safety, huang2017safety, raghunathan2018semidefinite, katz2017reluplex}. These methods can be categorized into exact or approximate depending on how they evaluate/approximate the nonlinear activation functions in the DNN. Of particular interest to this paper is the tight certification of ReLU NNs through formal verification techniques such as Satisfiability Modulo Theories (SMT) solvers~\cite{huang2017safety, katz2017reluplex, ehlers2017formal} or mixed-integer programming (MIP)~\cite{lomuscio2017approach, tjeng2017evaluating}. These approaches can verify the linear properties of the ReLU NN output given linear constraints on the network inputs. More scalable but less accurate methods that apply linear programming (LP)~\cite{wong2018provable} or semidefinite programming (SDP) relaxations~\cite{fazlyab2019safety, raghunathan2018semidefinite,fazlyab2019efficient,fazlyab2019probabilistic} of NNs to over approximate the output range can be found in the literature.  \\


\noindent \emph{Stability and reachability analysis of NN-controlled systems}: Despite their high performance, NN-controlled systems lack safety and stability guarantees. Motivated by this, several works have studied the challenging task of verifying NN-controlled systems.
In~\cite{hu2020reach}, built upon the framework of \cite{fazlyab2019safety}, the authors over-approximate the reachable set of NN-controlled LTI systems by bounding the nonlinear activation functions of the NN through quadratic constraints (QCs). In \cite{huang2019reachnn, dutta2019reachability}, the authors approximate NN functions through polynomial approximations to conduct a closed-loop reachability analysis. Ivanov et al.~\cite{ivanov2019verisig} transform a sigmoid-based NN into a hybrid system and propose the verification of the NN-controlled system through standard hybrid system verification tools. 

For stability analysis, linear matrix inequality (LMI)-based sufficient stability conditions are derived by abstracting the nonlinear activation function in NNs through QCs~\cite{kim2018standard} or by linear difference inclusions~\cite{tanaka1996approach, limanond1998neural}. Yin et al.~\cite{yin2020stability} use a QC abstraction of NNs to analyze uncertain plants with perturbations described by integral quadratic constraints (IQCs). The input-output stability of an NN-controlled system is considered in~\cite{jin2018stability} where QCs are constructed from the bounds of partial gradients of NN controllers. For ReLU NN controllers, the authors in~\cite{karg2020stability} exploit the fact that a ReLU NN induces a piecewise affine (PWA) function and analyze the closed-loop stability by identifying the linear dynamics around the equilibrium. However, the number of modes generated by the ReLU NN is exponential in the number of neurons, and can easily make general PWA system stability analysis tools intractable~\cite{biswas2005survey, ferrari2002analysis, johansson1997computation, rubagotti2016lyapunov}.\\

\noindent \emph{Stability analysis of PWA systems}: In this paper, we consider PWA systems in feedback with ReLU NN controllers. The closed-loop system is itself an autonomous PWA system since a ReLU NN is, in fact, a continuous PWA function~\cite{pascanu2013number}. For continuous-time PWA systems, LMI-based approaches to synthesize piecewise affine~\cite{johansson2003piecewise}, piecewise quadratic (PWQ)~\cite{johansson1997computation} and piecewise polynomial Lyapunov functions~\cite{prajna2003analysis} have been proposed. The adaptation of these Lyapunov function synthesis methods for discrete-time PWA systems is summarized in~\cite{biswas2005survey}. 

Although formulated as convex programs, these methods require parameterizing a Lyapunov function candidate on each mode, and identifying the pairwise one-step transition sets between the modes for discrete-time PWA systems. Since the number of partitions generated by a deep ReLU network can be extremely large~\cite{pascanu2013number}, the aforementioned methods involve solving large-scale convex problems. Taking PWQ Lyapunov function synthesis as an example, the approach proposed in~\cite{biswas2005survey} requires solving a large-scale SDP which is conservative and only provides a sufficient condition for stability due to the application of the $\mathcal{S}$-procedure~\cite{boyd1994linear}. In this paper, we propose an MIP-based iterative approach for quadratic and PWQ Lyapunov function synthesis for a discrete-time PWA system controlled by a ReLU NN controller. Our proposed method does not require identifying all the partitions of the ReLU network and is non-conservative. \\


\noindent \emph{Sample-based Lyapunov function synthesis}: Synthesizing Lyapunov functions for nonlinear dynamical systems is very difficult, in general, and often involves solving large, nonconvex optimization problems. To avoid solving hard optimization problems directly, several sample-based Lyapunov function synthesis methods have been proposed. Topcu et al.~\cite{topcu2007stability} use simulation data to help postulate the region of attraction of continuous-time polynomial systems and find Lyapunov function candidates for solving bilinear matrix inequalities. Boffi et al.~\cite{boffi2020learning} propose a data-driven method that can learn Lyapunov functions from system trajectory data with probabilistic guarantees. Closely related to our work is~\cite{kapinski2014simulation}, where an iterative approach formulates Lyapunov function candidates for continuous-time nonlinear systems from simulation traces and improves the result by the counterexample trace generated by a falsifier at each iteration. Our method differs from theirs in that we provide a finite-step termination guarantee for our iterative algorithm. \\

\noindent \emph{Counterexample guided inductive synthesis}: The iterative approach alternating between a learning module and a verification module to synthesize a certificate for control systems is known as the Counter-Example Guided Inductive Synthesis (CEGIS) framework proposed by~\cite{solar2006combinatorial, solar2008program} in the verification community. The application of CEGIS on Lyapunov function synthesis for continuous-time nonlinear autonomous systems can be found in~\cite{ahmed2020automated, abate2020formal} using SMT solvers for verification. However, the termination of the iterative procedures in these works is not guaranteed. Notably, Ravanbakhsh et al.~\cite{ravanbakhsh2019learning} apply the CEGIS framework to synthesize control Lyapunov functions for nonlinear continuous-time systems and provide finite-step termination guarantees for the iterative algorithm through careful design of the learner, which essentially implements the maximum volume ellipsoid cutting-plane method~\cite{Tarasov1988}. Although our method also provides termination guarantees through the (analytic center) cutting-plane method, the considered problem setting (stability verification of NN-controlled PWA system), and the construction of the learner and the verifier are highly different. \\

The rest of the paper is organized as follows. After introducing the preliminaries on Lyapunov functions in Section~\ref{sec:preliminaries}, we state our problem formulation as finding an estimate of the region of attraction of a NN-controlled PWA system in Section~\ref{sec:prob_statement}. We develop our method in Section~\ref{sec:ACCPM_Lyap} and discuss its extensions and variations in Section~\ref{sec:extensions}. Section~\ref{sec:numerical} demonstrates the application of the proposed method through numerical examples and Section~\ref{sec:conclusion} concludes the paper. 

\subsection{Notations}
We denote the set of real numbers by $\mathbb{R}$, the set of integers by $\mathbb{Z}$, the $n$-dimensional real vector space by $\mathbb{R}^n$, and the set of $n \times m$ real matrices by $\mathbb{R}^{n \times m}$. The standard inner product between two matrices $A, B \in \mathbb{R}^{n \times m}$ is given by $\langle A, B \rangle = \tr(A^\top B)$ and the Frobenius norm of a matrix $A \in \mathbb{R}^{n \times m}$ is given by $\lVert A \rVert_F = (\tr(A^\top A))^{1/2}$. We denote the set of $n \times n$ symmetric matrices by $\mathbb{S}^n$, and $\mathbb{S}^n_{+}$ (resp., $\mathbb{S}^n_{++}$) denotes the set of $n \times n$ positive semidefinite (resp., definite) matrices. For two vectors $x \in \mathbb{R}^{n_x}$ and $y \in \mathbb{R}^{n_y}$, $(x, y) \in \mathbb{R}^{n_x + n_y}$ represents their concatenation. Given a set $\mathcal{S} \subseteq \mathbb{R}^{n_x + n_y}, \Proj_x (\mathcal{S}) = \{ x \in \mathbb{R}^{n_x} \vert \exists y \in \mathbb{R}^{n_y} \text{ s.t. } (x, y) \in \mathcal{S}\}$ denotes the orthogonal projection of $\mathcal{S}$ onto the subspace $\mathbb{R}^{n_x}$. Given a point $x \in \mathbb{R}^{n_x}$ and a compact set $\mathcal{R} \subseteq \mathbb{R}^{n_x}$, define the projection of $x$ to the set $\mathcal{R}$ as $\Proj_\mathcal{R}(x) = \arg \min_{y \in \mathcal{R}} \dist(x, y)$ where $\dist(x, y) = \lVert x - y \rVert_2$ is the Euclidean norm of $x - y$. For a set $\mathcal{S} \subseteq \mathbb{R}^n$, we denote $\text{cl}(\mathcal{S})$ the closure of $\mathcal{S}$, $\text{int}(\mathcal{S})$ the set of all interior points in $\mathcal{S}$, and $\mathcal{S}_\infty = \{d \in \mathbb{R}^n \vert x + \alpha d \in \mathcal{S}, \forall x \in \mathcal{S}, \forall \alpha \in \mathbb{R}_+ \}$ the recession cone of $\mathcal{S}$.

\section{Preliminaries}
\label{sec:preliminaries}
\subsection{Stability of nonlinear autonomous systems}
Consider a discrete-time autonomous system 
\begin{equation} \label{eq:general_system}
x_+ = f(x),
\end{equation}
where $x \in \mathbb{R}^{n_x}$ is the state and $f: \mathcal{R} \mapsto \mathbb{R}^{n_x}$ is a nonlinear, continuous function with domain $\mathcal{R} \subseteq \mathbb{R}^{n_x}$. Without loss of generality, we assume $0 \in \text{int}(\mathcal{R})$ is an equilibrium of the system, i.e., $f(0) = 0$. Denote $x_k$ the state of system~\eqref{eq:general_system} at time $k$ and $x_0$ the initial state. To study the stability properties of autonomous systems at their equilibrium points, Lyapunov functions may be used. 

\begin{definition} \normalfont{(Lyapunov stability \cite[Chapter~13]{haddad2011nonlinear})} \label{def:Lyap_stability}
	The equilibrium point $x = 0$ of the autonomous system~\eqref{eq:general_system} is
	\begin{itemize}
		\item \textbf{Lyapunov stable} if for each $\epsilon >0$, there exists $\delta = \delta(\epsilon)$ such that if 
		$\lVert x_0 \rVert < \delta$, then $\lVert x_k \rVert < \epsilon, \forall k \geq 0$.
		\item \textbf{asymptotically stable}  if it is Lyapunov stable and there exists $\delta > 0$ such that if
		$\lVert x_0 \rVert < \delta$, then $\lim_{k\rightarrow \infty} \|x_k\| = 0$.
		\item \textbf{geometrically stable} if there exists $\delta >0, \alpha > 1, 0< \rho < 1$ such that if
		$\lVert x_0 \rVert < \delta$, then $\lVert x_k \rVert \leq \alpha \rho^k \lVert x_0 \rVert \quad \forall k \geq 0$.
	\end{itemize}
\end{definition}
For an asymptotically stable equilibrium point, the region of attraction (ROA) is the set of initial states from which the trajectories of the autonomous system converge to the equilibrium.
\begin{definition}[Region of attraction]
	The ROA of the autonomous system~\eqref{eq:general_system} is defined as
	\begin{equation*}
	\mathcal{O} = \{ x_0 \in \mathcal{R} \vert \underset{k \rightarrow \infty}{\lim} \|x_k\| = 0 \}.
	\end{equation*}
\end{definition}
%

\begin{definition}[Successor set]
	For the autonomous system~\eqref{eq:general_system}, we denote the successor set from a set $\mathcal{X}$ as $\text{Suc}(\mathcal{X}) = \{ y \in \mathbb{R}^{n_x} \vert \exists x \in \mathcal{X} \text{ s.t. } y = f(x)  \}$.
\end{definition}

\begin{definition}[Positive invariant set] \label{def:invariance}
	A set $\mathcal{X} \subseteq \mathcal{R}$ is said to be positive invariant for the autonomous system~\eqref{eq:general_system} if for all $x_0 \in \mathcal{X}$, we have $x_k \in \mathcal{X}, \forall k \geq 0$.
\end{definition}

\begin{theorem}\normalfont{(\cite[Chapter~13]{haddad2011nonlinear})}  \label{thm:Lyap}
	Consider the discrete-time nonlinear system~\eqref{eq:general_system} and assume there is a continuous function $V(x): \mathcal{R} \mapsto \mathbb{R}$ with domain $\mathcal{R}$ such that
	\begin{subequations}
		\label{eq:Lyap_conditions}
		\begin{align}
		\begin{split} \label{eq:general_Lyap_cond_1}
		& V(0) = 0 \text{ and } V(x) > 0, \forall x \in \mathcal{X} \setminus \{0\}
		\end{split} \\
		\begin{split} \label{eq:general_Lyap_cond_3}
		& V(f(x)) - V(x) \leq 0, \forall x \in \mathcal{X},
		\end{split}
		\end{align}
	\end{subequations}
	where the set $\mathcal{X}$ is the region of interest (ROI) satisfying $\mathcal{X} \subseteq \mathcal{R}$, $\text{Suc}(\mathcal{X})\subseteq \mathcal{R}$, and $0 \in \text{int}(\mathcal{X})$. Then the origin is Lyapunov stable. If, in addition, 
\begin{equation}\label{eq:general_Lyap_cond_2}
	V(f(x)) - V(x) < 0, \forall x \in \mathcal{X} \setminus \{ 0 \},
\end{equation}
	then the origin is asymptotically stable.
\end{theorem}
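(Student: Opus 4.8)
The plan is to run the classical Lyapunov sublevel-set argument, adapted to discrete time, using only the continuity and positive definiteness of $V$ on the region of interest $\mathcal{X}$ together with the decrement conditions. For \emph{Lyapunov stability}, fix $\epsilon > 0$. Since $0 \in \text{int}(\mathcal{X})$, I may shrink $\epsilon$ so that the closed ball $\bar{B}_\epsilon = \{x : \|x\| \leq \epsilon\} \subseteq \mathcal{X}$ (proving the statement for smaller $\epsilon$ implies it for larger ones, as the same $\delta$ then works). The sphere $\{x : \|x\| = \epsilon\}$ is compact and avoids the origin, so by continuity and~\eqref{eq:general_Lyap_cond_1} the value $\alpha := \min_{\|x\| = \epsilon} V(x)$ is attained and strictly positive. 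Continuity of $V$ at $0$ with $V(0) = 0$ then yields $\delta \in (0, \epsilon)$ with $V(x) < \alpha$ for all $\|x\| < \delta$. The idea is to show that the sublevel set $\Omega = \{ x \in \bar{B}_\epsilon : V(x) < \alpha \}$ (more precisely, the connected component containing the origin) is positively invariant in the sense of Definition~\ref{def:invariance} and contains $B_\delta$, which immediately gives $\|x_k\| < \epsilon$ for all $k$ whenever $\|x_0\| < \delta$, i.e.\ Lyapunov stability per Definition~\ref{def:Lyap_stability}.

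The heart of the argument, and the step I expect to be the main obstacle, is establishing this positive invariance in discrete time. For $x \in \Omega$ we have $x \in \bar{B}_\epsilon \subseteq \mathcal{X}$, so~\eqref{eq:general_Lyap_cond_3} applies and $V(f(x)) \leq V(x) < \alpha$; moreover $f(x) \in \text{Suc}(\mathcal{X}) \subseteq \mathcal{R}$, so $V(f(x))$ is well defined. In continuous time one concludes that $f(x)$ cannot cross the sphere because $V \geq \alpha$ there; in discrete time the successor could in principle \emph{jump} over the sphere. I would close this gap by working with the connected component of the sublevel set containing the origin, using $V(f(x)) < \alpha$ to exclude membership on $\{\|x\| = \epsilon\}$, and combining this with a continuity/connectedness argument (or a further shrinking of the working set via continuity of $f$ at the origin) to rule out $\|f(x)\| > \epsilon$. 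Once invariance is in hand, an induction on $k$ starting from $x_0 \in B_\delta \subseteq \Omega$ keeps every $x_k$ in $\Omega \subseteq B_\epsilon$.

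For \emph{asymptotic stability}, assume in addition the strict decrement~\eqref{eq:general_Lyap_cond_2}. By the previous part, for $\|x_0\|$ small the trajectory stays in the compact set $\text{cl}(\Omega) \subseteq \bar{B}_\epsilon$, and along it $V(x_k)$ is nonincreasing and bounded below by $0$, hence converges to some $V^\star \geq 0$. I would finish by contradiction: if $V^\star > 0$, then by continuity of $V$ at the origin there is $c \in (0,\epsilon)$ with $V(x) < V^\star$ on $B_c$, so that $V(x_k) \geq V^\star$ traps the trajectory in the compact annulus $\{ c \leq \|x\| \leq \epsilon \}$; on this set the continuous map $x \mapsto V(f(x)) - V(x)$ attains a maximum $-\gamma < 0$ by~\eqref{eq:general_Lyap_cond_2} and compactness, and telescoping gives $V(x_k) \leq V(x_0) - k\gamma \to -\infty$, contradicting $V \geq 0$. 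Hence $V^\star = 0$, and positive definiteness together with continuity of $V$ forces $\|x_k\| \to 0$, which is asymptotic stability.
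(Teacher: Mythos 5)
The paper itself gives no proof of Theorem~\ref{thm:Lyap}; it imports the result from \cite[Chapter~13]{haddad2011nonlinear}. So your proposal must be judged against the standard textbook argument. Your asymptotic-stability part is correct and essentially the classical one: monotone convergence of $V(x_k)$ to some $V^\star \geq 0$, trapping of the trajectory in a compact annulus if $V^\star > 0$, a uniform bound $V(f(x)) - V(x) \leq -\gamma < 0$ on that annulus from compactness and \eqref{eq:general_Lyap_cond_2}, and a telescoping contradiction. That part is sound \emph{provided} the invariance claim from the stability part is available.

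The genuine gap is in the stability part, at exactly the step you flag. Your primary fix --- passing to the connected component of $\Omega = \{x \in \bar{B}_\epsilon : V(x) < \alpha\}$ containing the origin and invoking a connectedness argument --- cannot work in discrete time: the orbit $\{x_k\}$ is a sequence of isolated points, not a curve, so there is no intermediate-value mechanism forcing $f(x)$ to pass through the sphere $\{\|y\| = \epsilon\}$ before landing outside it. Knowing $V(f(x)) \leq V(x) < \alpha$ only excludes $f(x)$ from that sphere; it does not exclude $\|f(x)\| > \epsilon$, because positivity of $V$ is guaranteed only on $\mathcal{X}$ and $\alpha$ is a minimum over the sphere alone, so $V$ may perfectly well take values below $\alpha$ at points beyond the sphere. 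The correct repair is the one you mention only parenthetically, and it has to be executed, since it changes the sublevel threshold: by continuity of $f$ at the origin (recall $f(0)=0$), pick $r \in (0, \epsilon)$ such that $\|f(x)\| < \epsilon$ whenever $\|x\| < r$; then set $\beta := \min\{V(x) : r \leq \|x\| \leq \epsilon\} > 0$ (positive by compactness and \eqref{eq:general_Lyap_cond_1}) and work with $W = \{x \in \bar{B}_\epsilon : V(x) < \beta\}$. By construction $W \subseteq B_r$, so for every $x \in W$ we get both $\|f(x)\| < \epsilon$ and, by \eqref{eq:general_Lyap_cond_3}, $V(f(x)) \leq V(x) < \beta$; hence $f(x) \in W$, i.e., $W$ is positively invariant in the sense of Definition~\ref{def:invariance} --- no connectedness needed. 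Choosing $\delta$ with $B_\delta \subseteq W$ by continuity of $V$ at $0$ gives Lyapunov stability per Definition~\ref{def:Lyap_stability}, and your asymptotic-stability argument then goes through verbatim with $W$ in place of $\Omega$.
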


We call any $V(\cdot)$ satisfying the condition~\eqref{eq:general_Lyap_cond_1} a Lyapunov function \emph{candidate}. If additionally, $V(\cdot)$ satisfies the condition~\eqref{eq:general_Lyap_cond_3} or~\eqref{eq:general_Lyap_cond_2}, then $V(\cdot)$ is called a \emph{valid} Lyapunov function candidate. Since asymptotic stability is our primary focus in this paper, a Lyapunov function is any $V(\cdot)$ satisfying constraints~\eqref{eq:general_Lyap_cond_1} and~\eqref{eq:general_Lyap_cond_2} unless specified otherwise. 
Let $\alpha = \inf_{x \in \mathcal{R} \setminus \mathcal{X}} V(x)$. Then the set 
\begin{equation}
\mathcal{\tilde{O}} = \{ x \vert V(x) \leq \alpha\},
\end{equation}
is an inner approximation of the ROA and $\mathcal{\tilde{O}} \subset \mathcal{X}$ is the largest sublevel set of $V(x)$ that is contained in $\mathcal{X}$. 

\section{Problem Statement}
\label{sec:prob_statement}
\subsection{Plant model}
In this paper, we consider discrete-time piecewise affine (PWA) dynamical systems of the form
\begin{equation} \label{eq:hybrid_dyn}
x_{+} = \psi_i(x, u) = A_i x + B_i u + c_i, \  \forall x \in \mathcal{R}_i=\{ x \in \mathbb{R}^{n_x} \vert F_i x \leq h_i \}, 
\end{equation}
where $x \in \mathbb{R}^{n_x}$ is the state, $u \in \mathbb{R}^{n_u}$ is the control input, $x_{+} \in \mathbb{R}^{n_x}$ denotes the state at the next time instant, $i \in \mathcal{I} = \{ 1, 2, \cdots, N_m \}$ is the mode of the system, and $\{\mathcal{R}_i \}_{i \in \mathcal{I}}$ are polytopic partitions (i.e., $\mathcal{R}_i$ is a bounded polyhedron) of the domain $\mathcal{R} = \cup_{i=1}^{N_m} \mathcal{R}_i$. We assume that the partitions $\mathcal{R}_i$ satisfy $\text{int}(\mathcal{R}_i) \cap \text{int}(\mathcal{R}_j) = \emptyset$ and the PWA dynamics~\eqref{eq:hybrid_dyn} is well-posed, i.e., $\psi_i(x,u) = \psi_j(x, u) , \forall x \in \mathcal{R}_i \cap \mathcal{R}_j, \forall(i,j) \in \mathcal{I}^2$. We denote the PWA system~\eqref{eq:hybrid_dyn} compactly as $x_{+} = \psi(x, u)$. Without loss of generality, we assume the origin $(x, u) = 0$ is an equilibrium point of the PWA system, i.e., $\psi(0, 0) = 0$. As a special case, the PWA system~\eqref{eq:hybrid_dyn} reduces to a linear time-invariant (LTI) system when there is only one mode. In this case, we denote the LTI system as
\begin{equation} \label{eq:LTI_dyn}
x_+ = Ax + Bu, \quad x \in \mathcal{R}= \{ x \in \mathbb{R}^{n_x} \vert F x \leq h \}.
\end{equation}

\subsection{Neural network controller model}
In this paper we assume the PWA dynamical system \eqref{eq:hybrid_dyn} is driven by a multi-layer piecewise linear neural network controller $u = \pi(x)$, where $\pi: \mathbb{R}^{n_x} \rightarrow \mathbb{R}^{n_u}$ is described by 

\begin{equation} \label{eq:nn_controller}
\begin{aligned}
z_0 &= x\\
z_{\ell+1} &= \max(W_\ell z_\ell + b_\ell, 0), \quad \ell = 0, \cdots, L -1 \\
\pi(x) &= W_{L} z_{L} + b_L.
\end{aligned}
\end{equation}
Here, $z_0 = x \in \mathbb{R}^{n_0} \ (n_0 = n_x)$ is the input to the neural network, $z_{\ell+1} \in \mathbb{R}^{n_{\ell+1}}$ is the vector representing the output of the $(\ell+1)$-th hidden layer with $n_{\ell+1}$ neurons, $\pi(x) \in \mathbb{R}^{n_{L+1}} \ (n_{L+1} = n_u)$ is the output of the neural network, and $W_\ell \in \mathbb{R}^{n_{\ell + 1} \times n_\ell}, b_\ell \in \mathbb{R}^{n_{\ell + 1}}$ are the weight matrix and the bias vector of the $(\ell+1)$-th hidden layer. 

\subsection{Closed-loop stability analysis}
\label{sec:cl_system}
We denote the closed-loop dynamics of the PWA system~\eqref{eq:hybrid_dyn} in feedback with the ReLU network controller~\eqref{eq:nn_controller} as
\begin{equation}\label{eq:cl_dynamics}
x_+ = f_{cl}(x) := \psi(x, \pi(x)).
\end{equation}
Under the assumption $\pi(0) = 0$, the origin is an equilibrium of the closed-loop system, i.e., $f_{cl}(0) = 0$. Define the region of interest as a polytopic set given by
\begin{equation} \label{eq:ROI}
\mathcal{X} = \{ x\in\mathbb{R}^{n_x} \vert F_\mathcal{X} x \leq h_\mathcal{X}\}.
\end{equation}
We are interested in verifying the stability of the equilibrium point and finding an inner estimate of its ROA contained in $\mathcal{X}$, where the ROI $\mathcal{X}$ can be interpreted as a user-defined set that guides the search for an estimate of the ROA.

\begin{problem}\label{prob:ROA}
	Find an inner estimate of the ROA, $\tilde{\mathcal{O}} \subseteq \mathcal{X}$, for the closed-loop system~\eqref{eq:cl_dynamics}. 
\end{problem}

\begin{figure}[hbt!]
\centering
\includegraphics[width = 0.35 \linewidth]{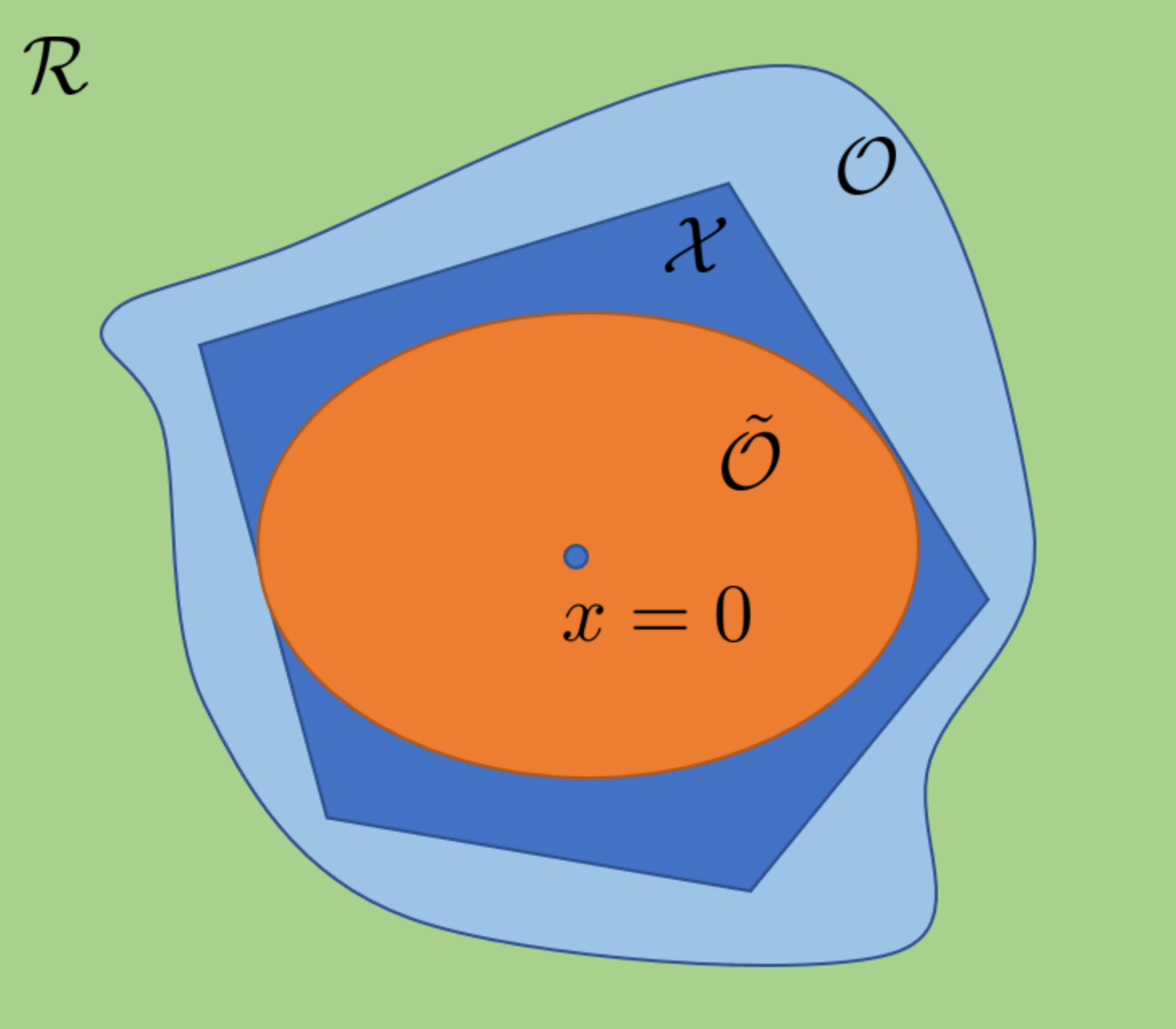}
\label{fig:prob_1_demo}
\caption{In Problem~\ref{prob:ROA}, the aim is to find an estimate of the ROA $\tilde{\mathcal{O}}$ inside the ROI $\mathcal{X}$. Note that the ROA $\mathcal{O}$ is an unknown, possibly nonconvex set and $\mathcal{X}$ is chosen agnostic to $\mathcal{O}$.}
\end{figure}

\section{Learning Lyapunov functions for the closed-loop system}
\label{sec:ACCPM_Lyap}

To solve Problem~\ref{prob:ROA}, we propose an iterative learning-based method to construct a valid Lyapunov function for the closed-loop system~\eqref{eq:cl_dynamics}.
In each iteration of this algorithm, a ``learner'' proposes a Lyapunov function candidate using a collection of samples of the closed-loop system. The learner then queries a ``verifier'' that either certifies that the proposed function is a valid Lyapunov function or rejects it with a counterexample, i.e., a state where the stability condition fails. This counterexample is then added to the training set of the learner. In the next iteration, the learner uses the new counterexample and a cutting-plane strategy to refine the set of Lyapunov function candidates. The learner repeats this process until termination when either it finds a valid Lyapunov function, or it verifies its non-existence in the given parameterized function space. In our framework, the learner solves a convex optimization problem to synthesize Lyapunov function candidates, and the verifier solves a mixed-integer program (MIP) to generate the counterexamples. We provide a finite-step termination guarantee for our algorithm through the analysis of cutting-plane methods from convex optimization. The proposed method is illustrated in Fig.~\ref{fig:learner_verifier}.

\begin{figure}[htb!]
	\centering
	\includegraphics[width = 0.8 \textwidth]{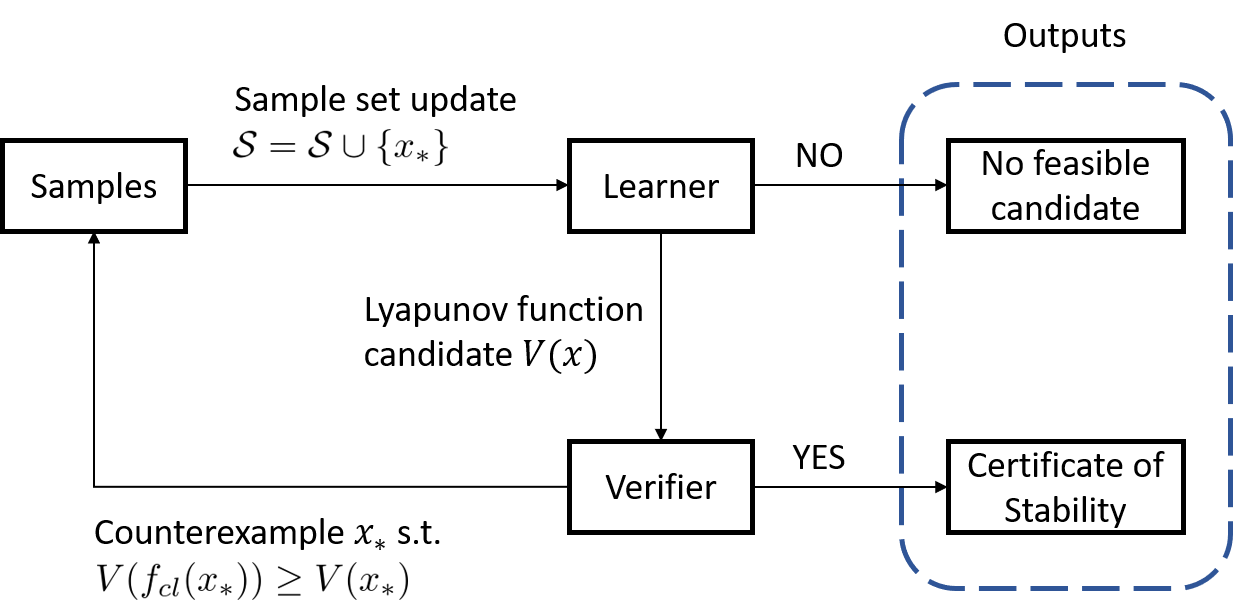}	
	\caption{Learning-based method for Lyapunov function synthesis.}
	\label{fig:learner_verifier}
\end{figure}

\subsection{Quadratic Lyapunov functions}
We begin by considering a quadratic Lyapunov function candidate of the form      
\begin{equation} \label{eq:Lyap_parameterization}
V(x; P) = x^\top P x
\end{equation}
with the matrix parameter $P \in \mathbb{S}^{n_x}_{++}$ and domain $\mathcal{R}$ the same as that of the PWA system~\eqref{eq:hybrid_dyn}. We aim to find a $P$ such that $V(x; P)$ would satisfy the constraint~\eqref{eq:general_Lyap_cond_2}. Since we can always scale $P$ while satisfying constraint~\eqref{eq:general_Lyap_cond_2}, without loss of generality, we describe the set of valid Lyapunov function candidate parameters as
\begin{align} \label{eq:valid_Lyap_class}
\mathcal{F} = \{ P \in \mathbb{S}^{n_x} \vert \ 0 \prec P \preceq I, \ f_{cl}(x)^{\top} P f_{cl}(x) - x^\tp P x < 0, \ \forall x \in \mathcal{X} \setminus \{0 \} \}.
\end{align}

We observe that $\mathcal{F}$ is a convex set defined by semidefinite constraints and infinitely many linear constraints on $P$. Although, theoretically, a feasible point in $\mathcal{F}$ can be obtained by solving a convex semi-infinite feasibility problem~\cite{polak2012optimization}, such an approach is numerically intractable. This motivates sample-based approaches which instead require the constraints in~\eqref{eq:valid_Lyap_class} to only hold for a finite set of samples $\mathcal{S} = \{x^1, x^2, \cdots, x^N\} \subset \mathcal{X}$, resulting in an over-approximation~\footnote{For numerical reasons, we define the over-approximation $\tilde{\mathcal{F}}$ of $\mathcal{F}$ as a compact set. As will be shown in the following sections, our proposed method will find a feasible point in the interior of the closure of $\mathcal{F}$ using barrier functions. Therefore, if $\mathcal{F}$ is non-empty, our method will find a feasible point in $\mathcal{F}$ satisfying all the strict inequalities.} of $\mathcal{F}$:
\begin{equation} \label{eq:valid_Lyap_class sample}
\mathcal{F} \subset \tilde{\mathcal{F}} = \{ P \in \mathbb{S}^{n_x} \ \vert \ 0 \preceq P \preceq I, f_{cl}(x)^{\top} P f_{cl}(x) - x^\tp P x \leq 0, \ \forall x \in \mathcal{S}  \}.
\end{equation}
Finding a feasible point in $\tilde{\mathcal{F}}$ is now a tractable convex feasibility program. However, there is no guarantee that the resulting solution $P$ would correspond to a valid Lyapunov function candidate, even if the number of samples approaches infinity. In this paper, we propose an efficient sampling strategy based on the analytic center cutting-plane method (ACCPM) to iteratively grow the sample set and refine the set $\tilde{\mathcal{F}}$ until we find a feasible point in $\mathcal{F}$. In the next subsections, we describe the proposed approach and provide finite-step termination and validity guarantees.

\subsection{The analytic center cutting-plane method}

Cutting-plane methods~\cite{atkinson1995cutting, elzinga1975central,boyd2007localization} are iterative algorithms to find a point in a target convex set $\mathcal{F}$ or to determine whether $\mathcal{F}$ is empty. In these methods, we have no information of $\mathcal{F}$ except for a ``cutting-plane oracle'' described below. At each iteration $i$, we have a ``localization'' set $\tilde{\mathcal{F}_{i}}$ defined by a finite set of inequalities that over-approximates the target set, i.e., $\mathcal{F} \subseteq \tilde{\mathcal{F}_{i}}$. If $\tilde{\mathcal{F}_{i}}$ is empty, then we have a proof that the target set $\mathcal{F}$ is also empty. Otherwise, we query the oracle at a point $P^{(i)} \in \tilde{\mathcal{F}_{i}}$. If $P^{(i)} \in \mathcal{F}$, the oracle returns `yes' and the algorithm terminates; if $P^{(i)} \notin \mathcal{F}$, it returns `no' together with a separating hyperplane that separates $P^{(i)}$ and $\mathcal{F}$. In the latter case, the cutting-plane method updates the localization set by $\tilde{\mathcal{F}}_{i+1} = \tilde{\mathcal{F}_{i}} \cap \{\text{half space defined by the separating hyperplane}\}$ (see Fig.~\ref{fig:cutting_plane_demo}). This process continues until either a point in the target set is found or the target set is certified to be empty.

Based on how the query point is chosen, different cutting-plane methods have been proposed including the center of gravity method~\cite{Lev65}, the maximum volume ellipsoid (MVE) cutting-plane method~\cite{Tarasov1988}, the Chebyshev center cutting-plane method~\cite{elzinga1975central}, the ellipsoid method~\cite{khachiyan1980polynomial, ellipsoidYudin}, the volumetric center cutting-plane method~\cite{vaidya1989new}, and the analytic center cutting-plane method~\cite{goffin1993computation, nesterov1995cutting, atkinson1995cutting}. In this paper, the query point $P^{(i)}$ is chosen as the analytic center of $\tilde{\mathcal{F}}_i$ according to the ACCPM, since it allows the localization set $\tilde{\mathcal{F}_{i}}$ to be described by linear matrix inequalities. 

\begin{figure}
	\centering
	\includegraphics[width = 0.6 \textwidth]{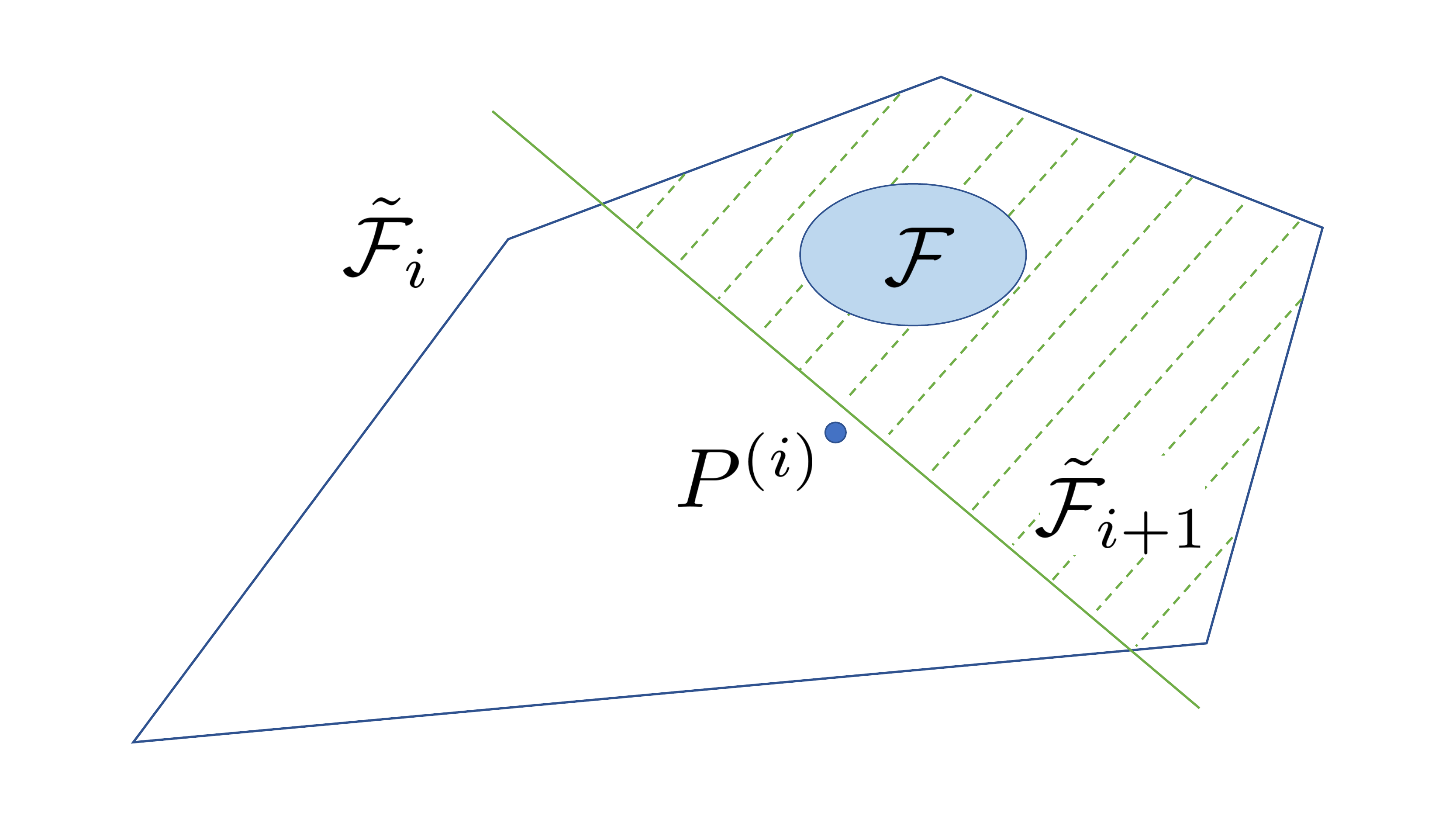}	
	\caption{In the $i$-th iteration, the query point $P^{(i)}$ is chosen as the ``center'' of the localization set $\tilde{\mathcal{F}}_i$ in order to guarantee the removal of a large enough portion of $\tilde{\mathcal{F}}_i$ from the search space whenever a separating hyperplane (green solid line) is given. The localization set is then updated to $\tilde{\mathcal{F}}_{i+1}$ (shaded set) which contains the target set $\mathcal{F}$.}
	\label{fig:cutting_plane_demo}
\end{figure}

\begin{definition}[Analytic center~\cite{boyd2004convex}] \label{def:analytic_center}
	The analytic center $x_{ac}$ of the set
	$\{x \vert f_i(x) \leq 0, i = 1, \cdots, m, \ Fx = g\}$,	is defined as the optimal solution of the convex problem
	\begin{equation} \label{eq:analytic_center}
	\begin{aligned}
	\underset{x}{\mathrm{minimize}} & \quad - \sum_{i=1}^m  \log(-f_i(x))  \quad \emph{subject to} \quad Fx = g.
	\end{aligned}
	\end{equation}
\end{definition}

To implement the ACCPM, we introduce a learner, which proposes Lyapunov function candidates based on a set of samples, and a verifier, which serves as a cutting-plane oracle. We describe each one of these two elements below.

\subsection{The learner}
Let $\mathcal{S} = \{x^1, x^2, \cdots, x^N \} \subset \mathcal{X}$ be a collection of samples from $\mathcal{X}$. The step-wise increment in the Lyapunov function as a function of the current state and the matrix $P$ can be written as
\begin{equation*}
\Delta V(x,P):= V(x_{+};P)-V(x;P) = f_{cl}(x)^\top P f_{cl}(x) - x^\top P x,
\end{equation*}
which is a linear function in $P$. Based on the sample set $\mathcal{S}$, we construct an outer-approximation of $\mathcal{F}$ as 
\begin{equation}\label{eq:F_0}
\tilde{\mathcal{F}} = \{P \in \mathbb{S}^{n_x} \vert \ 0 \preceq P \preceq I, \ \Delta V(x,P) \leq 0, \ \forall x \in \mathcal{S} \}.
\end{equation}
The localization set $\tilde{\mathcal{F}}$ represents the learner's knowledge about $\mathcal{F}$ by observing the one-step state transitions of the dynamical system from the sample set $\mathcal{S}$. Utilizing the ACCPM, the learner proposes a Lyapunov function candidate $V(x; P_{ac})$ with $P_{ac}$ as the analytic center of $\tilde{\mathcal{F}}$:
\begin{equation}\label{eq:ac_optimization}
P_{ac}: = \underset{P}{\text{argmin}} \quad -\sum_{x\in \mathcal{S}} \log( -\Delta V(x,P) ) - \log\det (I - P) - \log \det (P).
\end{equation}
We denote the objective function in~\eqref{eq:ac_optimization} as $\phi(\tilde{\mathcal{F}})$ and call it the \emph{potential function} of the set $\tilde{\mathcal{F}}$. If \eqref{eq:ac_optimization} is infeasible, then we have a proof that no quadratic Lyapunov function  exists for the closed-loop system. Otherwise, the learner proposes $V(x; P_{ac}) = x^\top P_{ac} x$ as a Lyapunov function candidate. Due to the log-barrier function in~\eqref{eq:ac_optimization}, $P_{ac}$ is in the interior of $\tilde{\mathcal{F}}$. At initialization, we have an empty sample set, i.e., $\mathcal{S} = \emptyset$ and the potential function simply becomes $\phi(\tilde{\mathcal{F}}) = -\log \det(I - P) - \log \det (P)$ with $P_{ac} = \frac{1}{2}I$ as the analytic center. 

\subsection{The verifier}
Given a Lyapunov function candidate $V(x; P^{(i)})$ proposed by the learner at iteration $i$, the verifier either ensures that this function satisfies the constraints in~\eqref{eq:general_Lyap_cond_1} and~\eqref{eq:general_Lyap_cond_2}, or returns a state where constraint~\eqref{eq:general_Lyap_cond_1} or~\eqref{eq:general_Lyap_cond_2} is violated as a counterexample. Since the log-barrier function in~\eqref{eq:ac_optimization} guarantees $P^{(i)} \succ 0$, constraint~\eqref{eq:general_Lyap_cond_1} is readily satisfied and the verifier must check the violation of constraint~\eqref{eq:general_Lyap_cond_2}. This can be done by solving the optimization problem
\begin{alignat}{2} \label{eq:LTI_MIQP_0}
&\underset{x \in \mathcal{X} \setminus \{0\}}{\mathrm{maximize}} \quad && \Delta V(x,P^{(i)}).
\end{alignat}
Next, we will show that for PWA systems and ReLU NN controllers, the optimization problem~\eqref{eq:LTI_MIQP_0} can be formulated as a mixed-integer quadratic program (MIQP). This is based on the fact that both the PWA dynamics $\psi(x, u)$ and the ReLU NN controller can be described by a set of mixed-integer linear constraints.

\subsubsection{Mixed-integer formulation of PWA dynamics}
\label{sec:disjunctive_PWA}

To formulate optimization problems involving hybrid dynamics, e.g., the optimal control problem of PWA systems, we need to describe system~\eqref{eq:hybrid_dyn} in a form compatible with optimization solvers. In this paper, we use disjunctive programming~\cite{marcucci2019mixed} to describe the PWA dynamics~\eqref{eq:hybrid_dyn}, which exploits the underlying geometry of the system. Other modeling approaches, e.g., mixed logical dynamical (MLD) formulation~\cite{borrelli2017predictive}, are also applicable and our proposed method can be easily extended to these modeling approaches since all of them encode the PWA dynamics by mixed-integer linear constraints.  

Recall that the PWA dynamics~\eqref{eq:hybrid_dyn} is given by $x_{+} = \psi_i(x, u) = A_i x + B_i u + c_i$ if $x \in \mathcal{R}_i$. Equivalently, we can describe the PWA dynamics in $\mathcal{R}_i$ in the lifted space of $(x, u, x_+) \in \mathbb{R}^{2n_x + n_u}$ as $(x, u, x_+) \in \gr(\psi_i)$, where $\gr(\psi_i)$ is the graph of $\psi_i$ which is defined as the following polytope in $\mathbb{R}^{2n_x + n_u}$:
\begin{equation} \label{eq:graph}
\gr(\psi_i) = \{ (x, u, x_+) \vert P_i(x, u, x_+) \leq q_i \}, \text{ with }
P_i = \begin{bmatrix}
A_i & B_i & -I \\ 
-A_i & -B_i & I \\
F_i & 0 & 0 \\
0 & G_u & 0 
\end{bmatrix}, \quad 
q_i = \begin{bmatrix}
-c_i \\ c_i \\h_i \\ h_u
\end{bmatrix}.
\end{equation}
The first two block rows of $P_i$ and $q_i$ define the dynamics $x_{+} = A_i x + B_i u + c_i$ and the third block row constrains $x$ to lie in $\mathcal{R}_i = \{x \vert F_i x \leq h_i\}$. In the last block row, we add an artificial constraint $u \in \bar{\mathcal{U}} = \{u \vert G_u u \leq h_u\}$ on the control input, where $\bar{\mathcal{U}}$ is a bounded set in $\mathbb{R}^{n_u}$. This guarantees that $\gr(\psi_i)$ have a common recession cone $\gr(\psi_i)_\infty = \{(0, 0, 0) \}$ for all $i \in \mathcal{I}$. Details of choosing $\bar{\mathcal{U}}$ will be discussed in Section~\ref{sec:implement_ACCPM}. Following from the definition of $\gr(\psi_i)$, we have 
\begin{equation*}
(x, u , x_+) \in \gr(\psi_i) \Leftrightarrow x_+ = \psi_i(x,u), \ x\in \mathcal{R}_i.
\end{equation*}
By denoting the PWA dynamics collectively as $x_{+} = \psi(x, u)$, we define the graph of $\psi(x,u)$ as $\gr(\psi) := \bigcup_{i\in \mathcal{I}} \gr (\psi_i)$. Then we have
\begin{equation*}
(x, u, x_+) \in \gr(\psi) \Leftrightarrow x_+ = \psi(x, u),
\end{equation*}
which indicates that the evolution of the PWA dynamics is constrained in a union of disjunctive sets $\gr(\psi)$. As a result, optimization over  $x, u, x_{+}$ subject to the PWA system dynamics is equivalent to optimizing over the set $\gr(\psi)$. Next, we will show that $\gr(\psi)$ lends itself to a mixed-integer formulation.

\begin{definition}[Mixed-integer formulation of a set~\cite{marcucci2019mixed}] \label{def:MIF}
	For a set $\mathcal{Q} \subset \mathbb{R}^{n_z}$, consider the set $\mathcal{L}_\mathcal{Q} \subseteq \mathbb{R}^{n_z + n_\lambda} \times \mathbb{Z}^{n_\mu}$ in a lifted space given by 
	\begin{equation}
	\mathcal{L}_\mathcal{Q} = \{ (z \in \mathbb{R}^{n_z}, \lambda \in \mathbb{R}^{n_\lambda}, \mu \in \mathbb{Z}^{n_\mu})  \vert \ell(z, \lambda, \mu) \leq v\},
	\end{equation}
	with a function $\ell: \mathbb{R}^{n_z + n_\lambda} \times \mathbb{Z}^{n_\mu} \rightarrow \mathbb{R}^{n_\ell}$ and a vector $v \in \mathbb{R}^{n_\ell}$. The set $\mathcal{L}_\mathcal{Q}$ is a mixed-integer formulation of $\mathcal{Q}$ if $\Proj_z(\mathcal{L}_\mathcal{Q}) = \mathcal{Q}$. If the function $\ell$ is linear (convex), we call the related formulation MIL (MIC). 
\end{definition} 
In Definition~\ref{def:MIF}, if $\mathcal{Q}$ has a mixed-integer formulation $\mathcal{L}_\mathcal{Q}$, then $\mathcal{Q}$ can be represented by the inequalities $\ell(z, \lambda, \mu) \leq v$, which means
\begin{equation*}
z \in \mathcal{Q} \Leftrightarrow \exists \lambda, z \text{ s.t. } \ell(z, \lambda, \mu) \leq v.
\end{equation*}
For the PWA dynamics $x_+ = \psi(x,u)$, we construct an MIL formulation of $\gr(\psi)$, known as the convex-hull formulation~\cite{marcucci2019mixed}, in the following lemma.

\begin{lemma} \label{lemma:MIL}
	For PWA system $x_+ = \psi(x, u)$ defined in~\eqref{eq:hybrid_dyn}, let $x_i \in \mathbb{R}^{n_x}, u_i\in \mathbb{R}^{n_u}, \mu_i \in \{0, 1\}$ for $i \in \mathcal{I}$. Then the set $\mathcal{L}_{\gr(\psi)} = \{(x, u, x_+), \{x_i\}, \{u_i\} , \{\mu_i\} \vert \eqref{eq:sharp_graph}\}$ defined by the constraints 
	\begin{equation}\label{eq:sharp_graph}
	\begin{aligned}
	&F_i x_i \leq \mu_i h_i, \ G_u u_i \leq \mu_i h_u, \ \mu_i \in \{0, 1\}, \ \forall i \in \mathcal{I} \\
	&(1, x, u, x_+) = \sum_{i \in \mathcal{I}} (\mu_i, x_i, u_i, A_i x_i + B_i u_i + \mu_i c_i), \\
	\end{aligned}
	\end{equation}
	is an MIL formulation of $\gr(\psi)$.
\end{lemma}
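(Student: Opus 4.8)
The plan is to verify the defining property in Definition~\ref{def:MIF}, namely that $\Proj_{(x,u,x_+)}(\mathcal{L}_{\gr(\psi)}) = \gr(\psi)$, by proving the two set inclusions separately. As a preliminary step I would unpack the vector equation in the second line of~\eqref{eq:sharp_graph} into its block components: $\sum_{i \in \mathcal{I}} \mu_i = 1$, $x = \sum_{i} x_i$, $u = \sum_{i} u_i$, and $x_+ = \sum_{i} (A_i x_i + B_i u_i + \mu_i c_i)$. Together with the integrality $\mu_i \in \{0,1\}$, the first of these forces exactly one index $j$ with $\mu_j = 1$ and $\mu_i = 0$ for all $i \neq j$.

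For the inclusion $\Proj_{(x,u,x_+)}(\mathcal{L}_{\gr(\psi)}) \subseteq \gr(\psi)$, I would take an arbitrary feasible lifted point and let $j$ be the unique active mode. The crucial step is to dispose of the inactive modes $i \neq j$: there $\mu_i = 0$ reduces the constraints to $F_i x_i \leq 0$ and $G_u u_i \leq 0$, i.e.\ $x_i \in (\mathcal{R}_i)_\infty$ and $u_i \in \bar{\mathcal{U}}_\infty$. Since $\mathcal{R}_i$ and $\bar{\mathcal{U}}$ are bounded polyhedra, their recession cones are trivial, forcing $x_i = 0$ and $u_i = 0$. The component sums then collapse onto mode $j$, giving $x = x_j$, $u = u_j$, and $x_+ = A_j x_j + B_j u_j + c_j = A_j x + B_j u + c_j = \psi_j(x,u)$; moreover $\mu_j = 1$ yields $F_j x \leq h_j$ and $G_u u \leq h_u$, so $x \in \mathcal{R}_j$ and $u \in \bar{\mathcal{U}}$. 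Hence $(x,u,x_+) \in \gr(\psi_j) \subseteq \gr(\psi)$.

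For the reverse inclusion $\gr(\psi) \subseteq \Proj_{(x,u,x_+)}(\mathcal{L}_{\gr(\psi)})$, I would argue by explicit construction. If $(x,u,x_+) \in \gr(\psi)$ then $(x,u,x_+) \in \gr(\psi_j)$ for some $j$, so $x \in \mathcal{R}_j$, $u \in \bar{\mathcal{U}}$, and $x_+ = A_j x + B_j u + c_j$. Setting $\mu_j = 1$, $x_j = x$, $u_j = u$, and $\mu_i = 0$, $x_i = 0$, $u_i = 0$ for $i \neq j$ satisfies every constraint in~\eqref{eq:sharp_graph} by direct substitution, exhibiting $(x,u,x_+)$ as the projection of a feasible point.

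The main obstacle—and the only place where genuine geometry rather than bookkeeping enters—is the recession-cone argument for the inactive modes. This is exactly why the artificial bounding set $\bar{\mathcal{U}}$ was appended to $\gr(\psi_i)$ in~\eqref{eq:graph}: without boundedness of $\mathcal{R}_i$ and $\bar{\mathcal{U}}$, the homogenized constraints $F_i x_i \leq 0$ and $G_u u_i \leq 0$ would admit nonzero solutions lying in nontrivial recession cones, and the spurious terms $A_i x_i + B_i u_i$ could corrupt the computed value of $x_+$, breaking the forward inclusion. The boundedness guarantees $(\mathcal{R}_i)_\infty = \{0\}$ and $\bar{\mathcal{U}}_\infty = \{0\}$, which is precisely the condition under which this convex-hull (disjunctive) formulation is sharp.
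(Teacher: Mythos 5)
Your proof is correct, and it takes a more self-contained route than the paper. The paper's own proof is essentially a two-line citation: it observes that boundedness of $\mathcal{R}_i$ and $\bar{\mathcal{U}}$ makes the graphs $\gr(\psi_i)$ share the common trivial recession cone $\gr(\psi_i)_\infty = \{(0,0,0)\}$, and then invokes Corollary~3.6 of the disjunctive-programming reference, which states that under a common recession cone the convex-hull formulation is a valid MIL formulation of the union. You instead re-derive that corollary from first principles in this special case: the explicit construction for $\gr(\psi) \subseteq \Proj(\mathcal{L}_{\gr(\psi)})$, and, for the forward inclusion, the observation that $\mu_i = 0$ homogenizes the inactive-mode constraints to $F_i x_i \leq 0$, $G_u u_i \leq 0$, which by boundedness (hence trivial recession cones, using that each $\mathcal{R}_i$ is nonempty) force $x_i = u_i = 0$ so the sums collapse onto the active mode. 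This is exactly the geometric content behind the cited corollary, so both arguments hinge on the same fact; what yours buys is a proof readable without consulting the reference and a transparent explanation of why the artificial bound $\bar{\mathcal{U}}$ is needed, while the paper's citation buys brevity and covers the more general situation of a common but possibly nontrivial recession cone, where inactive-mode variables need not vanish and the formulation is still sharp.
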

\begin{proof}
	Since $\bar{\mathcal{U}}$ and $\mathcal{R}_i$ are bounded, $\gr(\psi_i)$ share the common recession cone $\gr(\psi_i)_{\infty} = \{ (0, 0, 0) \}$ for all $i\in \mathcal{I}$. By~\cite[Corollary $3.6$]{marcucci2019mixed}, the set $\mathcal{L}_{\gr(\psi)}$ in the lemma is an MIL formulation of $\gr(\psi)$.
\end{proof}
In constraints~\eqref{eq:sharp_graph}, the binary variable $\mu_i$ can be interpreted as an indicator of the current mode of the state $x$. For instance, let $\mu_i = 1$ and consequently $\mu_j = 0, \forall j \neq i$. Since $F_j x_j \leq \mu_j h_j = 0 \Rightarrow x_j = 0, \forall j \neq i$, we have $x = \sum_{k \in \mathcal{I}} x_k = x_i$. Then the state $x$ satisfies $F_i x \leq h_i$, i.e., $x \in \mathcal{R}_i$. Similarly, $G_u u_j \leq \mu_j h_u = 0 \Rightarrow u_j = 0, \forall j \neq i$ and $u = \sum_{k \in \mathcal{I}} u_k = u_i$. It then follows $x_+ = \sum_{k\in\mathcal{I}} A_k x_k + B_k u_k + \mu_k c_k = A_i x + B_i u + c_i = \psi_i(x, u)$. The MIL formulation of $\gr(\psi)$ in~\eqref{eq:sharp_graph} allows us to represent the PWA dynamical constraint $x_+ = \psi(x, u)$ through a finite number of mixed-integer linear constraints.

\subsubsection{Mixed-integer formulation of ReLU NN} \label{subsub: Mixed integer formulation of ReLU NN}

Consider a scalar ReLU function $y=\max(0,x)$ where $\underline{x} \leq x \leq \bar{x}$. Then it can be shown that the ReLU function admits the following mixed-integer representation~\cite{tjeng2017evaluating},
\begin{align}
y=\max(0,x), \ \underline{x} \leq x \leq \bar{x} \iff \{y \geq 0, \ y \geq x, y \leq x - \underline{x} (1-t), \ y \leq \bar{x} t, \ t \in \{0,1\}\},
\end{align}
where the binary variable $t \in \{0,1\}$ is an indicator of the activation function being active ($y=x$) or inactive ($y=0$). Now consider a ReLU network described by the equations in \eqref{eq:nn_controller}. Suppose $\underline{m}^{\ell}$ and $\bar{m}^{\ell}$ are known element-wise lower and upper bounds on the input to the $(\ell+1)$-th activation layer, i.e., $\underline{m}_{\ell} \leq W_{\ell} z_{\ell} +b_{\ell}\leq \bar{m}_{\ell}$. Then the neural network equations are equivalent to a set of mixed-integer constraints:
\begin{align} \label{eq:MIL_NN}
z_{\ell+1} = \max(W_\ell z_\ell + b_\ell,0) \iff \begin{cases}
z_{\ell+1} \geq W_\ell z_\ell + b_\ell \\ 
z_{\ell+1} \leq W_\ell z_\ell + b_\ell - \mathrm{diag}(\underline{m}_\ell) (\mathbf{1}-t_\ell) \\
z_{\ell+1} \geq 0 \\
z_{\ell+1} \leq \mathrm{diag}(\bar{m}_\ell)  t_\ell,
\end{cases} 
\end{align}
where $t_{\ell} \in \{0,1\}^{n_{\ell+1}}$ is a vector of binary variables for the $(\ell+1)$-th activation layer and $\mathbf{1}$ denotes the vector of all ones. 
We note that the element-wise pre-activation bounds $\{\underline{m}_{\ell} , \bar{m}_{\ell}\}$ can be found by, for example, interval bound propagation or linear programming assuming known bounds on the input of the neural network \cite{weng2018towards,zhang2018efficient,hein2017formal,wang2018efficient,wong2018provable}. 
Since state-of-the-art solvers for mixed-integer programming are based on the Branch$\&$Bound algorithm~\cite{wolsey1999integer}, tight upper/lower bounds $\{\bar{m}_\ell, \underline{m}_\ell\}$ will allow the Branch$\&$Bound algorithm to prune branches more efficiently and reduce the total running time.

\subsubsection{Verifier construction through MIQP}
\label{sec:verifier_MIQP}
Having the mixed-integer formulation of the PWA dynamics and the ReLU network at our disposal, we can formulate the verifier's optimization problem~\eqref{eq:LTI_MIQP_0} as the following MIQP:
\begin{subequations} \label{eq:pwa_MIQP}
	\allowdisplaybreaks
	\begin{align}
	\begin{split} \label{eq:MIQP_obj} 
	\underset{ \begin{subarray}{c}
		x, u, y, \{z_\ell \}, \{t_\ell\}  \\
		\{\mu_i\}, \{x_i\}, \{u_i\}
		\end{subarray} }{\text{maximize}} & \quad  y^\top P^{(i)} y  - x^\top P^{(i)} x
	\end{split}\\
	\begin{split}  \label{eq:MIQP_constr_1}
	\text{subject to} 
	& \quad F_\mathcal{X} x \leq h_\mathcal{X}
	\end{split}\\
	\begin{split} \label{eq:MIQP_constr_guard}
	& \quad \lVert x \rVert_\infty \geq \epsilon 
	\end{split} \\
	\begin{split} \label{eq:MIQP_constr_2}
	\quad z_0 = x
	\end{split}\\
	\begin{split}
	& \quad \text{for } \ell = 0, \cdots, L-1:
	\end{split}\\
	\begin{split}
	& \quad \quad z_{\ell+1} \geq 0, \ z_{\ell+1} \geq W_\ell z_{\ell} + b_\ell 
	\end{split}\\
	\begin{split} \label{eq:MIQP_bigM_1}
	& \quad \quad z_{\ell+1} \leq W_\ell z_{\ell} + b_\ell - \mathrm{diag}(\underline{m}_\ell) (\mathbf{1}-t_\ell)
	\end{split}\\
	\begin{split} \label{eq:MIQP_bigM_2}
	& \quad \quad z_{\ell+1} \leq \mathrm{diag}(\bar{m}_\ell) t_\ell, \ t_\ell \in \{0, 1 \}^{n_\ell}
	\end{split}\\
	\begin{split} \label{eq:MIQP_constr_3}
	& \quad u = W_L z_L + b_L
	\end{split}\\
	\begin{split} \label{eq:MIQP_constr_4}
	& \quad F_i x_i \leq \mu_i h_i, \ \forall i \in \mathcal{I} 
	\end{split}\\
	\begin{split}
	& \quad G_u u_i \leq \mu_i h_u, \forall i \in \mathcal{I}
	\end{split}\\
	\begin{split}
	& \quad (1, x, u, y) = \sum_{i \in \mathcal{I}} (\mu_i, x_i, u_i, A_i x_i + B_i u_i + \mu_i c_i)
	\end{split}\\
	\begin{split}\label{eq:MIQP_constr_5}
	& \quad \mu_i \in \{0, 1\}, \ \forall i \in \mathcal{I}.
	\end{split}
	\end{align}
\end{subequations}
Constraint~\eqref{eq:MIQP_constr_1} restricts the search space to $\mathcal{X}$; constraint~\eqref{eq:MIQP_constr_guard} with $\epsilon > 0$ excludes a neighborhood around the origin\footnote{For choice of $\epsilon$ see the discussion in Section~\ref{sec:implement_ACCPM}.} to verify constraint~\eqref{eq:general_Lyap_cond_2} with strict inequalities;  constraints~\eqref{eq:MIQP_constr_2} to~\eqref{eq:MIQP_constr_3} exactly model the ReLU network controller $u = \pi(x)$ with known layer-wise upper and lower bounds $\{\bar{m}_\ell, \underline{m}_\ell\}$ (see Section~\ref{subsub: Mixed integer formulation of ReLU NN}); and constraints~\eqref{eq:MIQP_constr_4} to~\eqref{eq:MIQP_constr_5} describe the PWA dynamics $y = \psi(x, u)$ (see Section~\ref{sec:disjunctive_PWA}). The objective function~\eqref{eq:MIQP_obj} aims to find a counterexample where the Lyapunov condition~\eqref{eq:general_Lyap_cond_2} is violated with the largest margin.

Denote by $x_*$ the optimal solution and $p^*$ the optimal value of problem~\eqref{eq:pwa_MIQP}. If $p^* < 0$, then we have a proof that $V(x; P^{(i)})$ is a Lyapunov function, since $\Delta V(x, P^{(i)}) < 0$ for all $x \in \mathcal{X} \setminus B_\epsilon$ where $B_\epsilon = \{x \vert \lVert x \rVert_\infty < \epsilon\}$ and we terminate the search. As will be shown in Section~\ref{sec:implement_ACCPM}, the choice of $\epsilon$ guarantees $p^* < 0$ is sufficient to prove the asymptotic stability of the origin. If $p^* \geq 0$, then we reject the Lyapunov function candidate $V(x;P^{(i)})$ with $x_*$ as the counterexample since constraint~\eqref{eq:general_Lyap_cond_2} is violated at a non-zero state $x_* \in \mathcal{X}$ with $\Delta V(x_*, P^{(i)}) = p^* \geq 0$. Meanwhile, we obtain a separating hyperplane $\Delta V(x_*, P) = 0$ in $P$ such that $\Delta V(x_*, P) \leq 0$ for all $P \in \mathcal{F}$. Therefore, the MIQP~\eqref{eq:pwa_MIQP} serves as a cutting-plane oracle.

\subsection{Implementation of the ACCPM}
\label{sec:implement_ACCPM}
With a learner that solves the convex program in ~\eqref{eq:ac_optimization} and a verifier that solves the MIQP in ~\eqref{eq:pwa_MIQP}, we summarize the overall procedure in Algorithm~\ref{alg:ACCPM}. In Algorithm~\ref{alg:ACCPM}, we iteratively grow the sample set $\mathcal{S}_i$ through the interaction between the learner and the verifier to guide the search for Lyapunov functions. Several practical issues of implementing Algorithm~\ref{alg:ACCPM} are addressed below.

\begin{algorithm}[htb]
	\caption{Learning-based Lyapunov function synthesis}\label{alg:ACCPM}
	\hspace*{\algorithmicindent} \textbf{Input:} initial sample set $\mathcal{S}_0 \subset \mathcal{X}$\\
	\hspace*{\algorithmicindent} \textbf{Output:} 
	$P_*$, Status
	\begin{algorithmic}[1]
		\Procedure{LearningLyapunov}{}
		\State $i=0$
		\While {True}
		\State Generate an outer-approximation $\tilde{\mathcal{F}}_i$ from sample set $\mathcal{S}_i$ by~\eqref{eq:F_0}.
		\If {$\tilde{\mathcal{F}}_{i} = \emptyset$} 
		\State Return: Status = Infeasible, $P_* = \emptyset$.
		\EndIf	
		\State $P^{(i)} = \text{AnalyticCenter}(\tilde{\mathcal{F}_{i}})$ \Comment{The learner proposes $P^{(i)} = \arg \min \eqref{eq:ac_optimization}$ }
		\State Query the verifier (cutting-plane oracle) at $P^{(i)}$ \Comment{The verifier solves~\eqref{eq:pwa_MIQP} with $P^{(i)}$}
		\If {Verifier returns `yes'} \Comment{if $\min \eqref{eq:pwa_MIQP} < 0$}
		\State Return: Status = Feasible, $P_* = P^{(i)}$.
		\Else  \Comment{if $\min \eqref{eq:pwa_MIQP} \geq 0$}
		\State Extract the counterexample $x_* = \arg \min_x \eqref{eq:pwa_MIQP}$
		\State $\mathcal{S}_{i+1} = \mathcal{S}_i \cup \{x_*\}$
		\EndIf	
		\State $i = i+1$
		\EndWhile
		\EndProcedure
		\Function{AnalyticCenter}{$\tilde{\mathcal{F}}_i$}
		\State $P_{ac} = $ Optimal solution to \eqref{eq:ac_optimization}
		\State \Return $P_{ac}$
		\EndFunction
	\end{algorithmic}
\end{algorithm}

\medskip


\noindent \textbf{Exclusion of the origin}: In the MIQP~\eqref{eq:pwa_MIQP}, we exclude the $\ell_\infty$-norm ball $B_\epsilon = \{ x \vert \lVert x \rVert_\infty < \epsilon \}$ in order to verify asymptotic stability of the origin. The choice of $\epsilon$ is based on the observation that the closed-loop system $x_+ = f_{cl}(x)$ is actually a PWA autonomous system and we can identify the local linear dynamics $x_+ = A_{cl}x$ in the polytopic partition $\mathcal{D}_0$ that contains the origin~\cite{karg2020stability}. It follows from linear system theory that the origin is asymptotically stable if and only if all eigenvalues of $A_{cl}$ have magnitudes less than $1$~\cite{chen1995linear}. As a result, we choose $\epsilon$ small enough such that $B_\epsilon \subseteq \mathcal{D}_0$ and divide the asymptotic stability verification into two steps: use MIQP~\eqref{eq:pwa_MIQP} to show the convergence of closed-loop trajectories to $B_\epsilon$ and use $A_{cl}$ to prove the convergence to the origin. Since the stability of $A_{cl}$ can be easily verified, our goal is to find a Lyapunov function that certifies the convergence of closed-loop trajectories to $B_\epsilon$. 

\medskip

\noindent \textbf{Solvability of nonconvex MIQP}: The optimization problem~\eqref{eq:pwa_MIQP} is a nonconvex MIQP since the quadratic objective function~\eqref{eq:MIQP_obj} is indefinite. Therefore, the relaxation of the problem after removing the integer constraints would result in a nonconvex quadratic program. Nonconvex MIQP can be solved to global optimality through Gurobi v$9.0$~\cite{gurobi} by transforming the nonconvex quadratic expression into a bilinear form and applying spatial branching~\cite{belotti2013mixed}. More information on solving nonconvex mixed-integer nonlinear programming can be found in~\cite{vigerske2013decomposition, tawarmalani2013convexification, belotti2009branching}. In this paper, we rely on Gurobi to solve the nonconvex MIQP~\eqref{eq:pwa_MIQP} automatically.

\medskip

\noindent \textbf{Choice of upper/lower bounds $\{\bar{m}_\ell, \underline{m}_\ell \}$ }: As discussed in Section~\ref{subsub: Mixed integer formulation of ReLU NN}, tight element-wise upper and lowers bounds on the pre-activation values would lead to a more efficient pruning in the Branch$\&$Bound algorithm and consequently reduce the overall running time. Finding tighter bounds, however, would require more computation. This trade-off has been explored in the context of neural network verification in \cite{tjeng2017evaluating}.


\medskip

\noindent \textbf{Choice of artificial control input constraint}: In the definition of $\gr(\psi_i)$, we introduced the artificial control input constraint $\bar{\mathcal{U}} = \{u \vert G_u u \leq h_u\}$. First, $\bar{\mathcal{U}}$ should be a bounded set to guarantee the validity of~\eqref{eq:sharp_graph}. Second, $\bar{\mathcal{U}}$ should be an over-approximation of the output range of $\pi(x)$ over $\mathcal{X}$ to guarantee that the MIQP~\eqref{eq:pwa_MIQP} searches over all $x \in \mathcal{X}$. This can be done by choosing $\bar{\mathcal{U}}$ as a box constraint and applying interval arithmetic~\cite{moore2009introduction} to over-approximate the output range of $\pi(x)$. Other output range analysis tools based on LP~\cite{wong2018provable}, SDP~\cite{fazlyab2019safety, raghunathan2018semidefinite}, or MILP~\cite{tjeng2017evaluating} are also applicable.

\medskip

\noindent \textbf{Normalization of the separating hyperplane}: Let $\mathcal{S}_i = \{ x_i^1, \cdots, x_i^N \}$. When generating the localization set $\tilde{\mathcal{F}}_i$, we apply the linear constraint $\Delta V(x_i^j, P) = f_{cl}(x_i^j)^\top P f_{cl}(x_i^j) - x_i^{j, \top} P x_i^j = \langle f_{cl}(x_i^j) f_{cl}(x_i^j)^\top -x_i^{j} x_i^{j, \top}, P \rangle = \langle D_j, P \rangle \leq 0$ for all $1 \leq j \leq N$  where $D_i = f_{cl}(x_i^j) f_{cl}(x_i^j)^\top -x_i^{j} x_i^{j, \top}$. We normalize the linear constraint $\langle D_i, P \rangle \leq 0 $ by $\langle D_i / \lVert D_i \rVert_F, P \rangle \leq 0$ and use the normalized constraint in the analytic center optimization problem~\eqref{eq:ac_optimization}.


\medskip

Next, we provide finite-step termination guarantees for the proposed algorithm based on the analysis of the ACCPM from convex optimization.

\subsection{Convergence analysis}
\label{sec:ACCPM_termination}
The convergence and complexity of the ACCPM have been studied in~\cite{atkinson1995cutting,nesterov1995cutting, ye1992potential, luo2000polynomial,goffin1996complexity, sun2002analytic} under various assumptions on the localization set, the form of the separating hyperplane, whether multiple cuts are applied, etc. Directly related to Algorithm~\ref{alg:ACCPM} and the search for a quadratic Lyapunov function is~\cite{sun2002analytic}, which analyzes the complexity of the ACCPM with a matrix variable and semidefiniteness constraints. Notably, it provides an upper bound on the number of iterations that Algorithm~\ref{alg:ACCPM} can run before termination. In~\cite{sun2002analytic}, it is assumed that
\begin{itemize}
	\item A1: $\mathcal{F}$ is a convex subset of $\mathbb{S}^{n_x}$.
	\item A2: $\mathcal{F}$ contains a non-degenerate ball of radius $\epsilon > 0$, i.e., there exists $P_{center}\in \mathbb{S}^{n_x}$ such that $\{P \in \mathbb{S}^{n_x} \vert \lVert P - P_{center} \rVert_F \leq \epsilon \} \subset \mathcal{F}$ where $\lVert \cdot \rVert_F$ is the Frobenius norm.
	\item A3: $\mathcal{F} \subset \{P \in \mathbb{S}^{n_x} \vert 0 \preceq P \preceq I \}$.
\end{itemize}
Let the ACCPM start with the localization set $\mathcal{F}_0 = \{ P \vert 0 \preceq P \preceq I\}$ (or empty sample set $\mathcal{S}_0$ in~\eqref{eq:ac_optimization}) and initialize the first query point $P^{(0)} = \frac{1}{2}I$ correspondingly. If at iteration $i$, a query point $P^{(i)}$ is rejected by the oracle, a separating hyperplane of the form $\langle D_i, P - P^{(i)} \rangle = 0$ is given and $D_i$ is normalized to satisfy $\lVert D_i \rVert_F = 1$. Then by induction, each localization set $\tilde{\mathcal{F}_{i}}$ with $i \geq 1$ is given by 
\begin{equation*}
\tilde{\mathcal{F}_{i}} = \{ P \vert 0 \preceq P \preceq I, \langle D_j, P \rangle \leq c_j, j = 0, \cdots, i-1 \}.
\end{equation*}
with $D_j$ defining the separating hyperplane at iteration $j$ and $c_j = \langle D_j, P^{(j)} \rangle$. The analytic center of $\tilde{\mathcal{F}_{i}}$ is found by minimizing its potential function
\begin{equation} \label{eq:potential_fcn}
\phi(\tilde{\mathcal{F}_{i}}) = -\sum_{j=0}^{i-1} \log(c_j - \langle D_j, P\rangle) - \log \det(P) - \log \det (I -P),
\end{equation}
i.e., the query point $P^{(i)} =  \arg\min \phi(\tilde{\mathcal{F}_{i}})$ at iteration $i$. Then the ACCPM following the above description achieves the following finite-step termination guarantee.
\begin{theorem}
	\label{thm:ACCPM_termination}
	The analytic center cutting-plane method under assumptions A1 to A3 terminates in at most $O({n_x}^3/\epsilon^2)$ calls to the oracle.
\end{theorem}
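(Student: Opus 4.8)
The plan is to prove the bound by the standard potential/volume analysis of the analytic center cutting-plane method, specialized to the matrix variable $P\in\mathbb{S}^{n_x}$ and the log-det barrier $-\log\det P - \log\det(I-P)$ for the box $0\preceq P\preceq I$. Since Algorithm~\ref{alg:ACCPM} is exactly the ACCPM with this barrier and with central cuts $\langle D_j,P\rangle\le c_j$, $c_j=\langle D_j,P^{(j)}\rangle$, normalized so that $\|D_j\|_F=1$, the cleanest route is to verify that our setup satisfies A1--A3 (A1 is the convexity of $\mathcal{F}$ in \eqref{eq:valid_Lyap_class}, A3 is the box constraint $P\preceq I$ in the parameterization, and A2 is the full-dimensionality hypothesis) and then invoke the complexity estimate of \cite{sun2002analytic}; to make the argument self-contained I would reproduce the two-sided estimate that underlies it.

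The first ingredient is a volume floor coming from A2. Because every separating hyperplane is a valid inequality for $\mathcal{F}$, the localization sets are nested and each contains $\mathcal{F}$, hence the ball $\{P:\|P-P_{center}\|_F\le\epsilon\}$; therefore $\mathrm{vol}(\tilde{\mathcal{F}}_i)\ge\kappa_d\,\epsilon^{d}$ with $d=\dim\mathbb{S}^{n_x}=n_x(n_x+1)/2$. Equivalently, evaluating the $j$-th cut's slack at $P=P_{center}+\epsilon D_j\in\mathcal{F}$ gives $c_j-\langle D_j,P_{center}\rangle\ge\epsilon$, which controls the potential \eqref{eq:potential_fcn} at $P_{center}$ and hence bounds its minimum value $\Phi_i:=\phi(\tilde{\mathcal{F}}_i)$ attained at the analytic center $P^{(i)}$.

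The second, and harder, ingredient is to show that recentering after each central cut makes measurable progress: using the self-concordance of the log-det barrier (with barrier parameter $2n_x$), one bounds below the decrease in the log-volume of the outer Dikin ellipsoid circumscribing $\tilde{\mathcal{F}}_i$ per iteration. The delicate point is that once $\tilde{\mathcal{F}}_i$ has shrunk toward the $\epsilon$-ball the cuts become shallow in the local Hessian-induced norm, so the per-step gain scales like $\epsilon^2$ rather than a fixed constant; this is precisely what turns the $\log(1/\epsilon)$ one would naively expect into the $1/\epsilon^2$ factor. Converting a Frobenius-normalized cut into this intrinsic geometry, and carrying the barrier parameter $2n_x$ and the dimension $d$ through the bookkeeping, is the main obstacle, and it is exactly the computation performed in \cite{sun2002analytic}. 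Combining the volume floor with the per-step decrease then caps the iteration count at $O(n_x^3/\epsilon^2)$, since continuing beyond this would drive $\mathrm{vol}(\tilde{\mathcal{F}}_i)$ below the floor $\kappa_d\,\epsilon^{d}$, contradicting A2.
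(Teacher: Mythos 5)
Your overall strategy coincides with the paper's: Theorem~\ref{thm:ACCPM_termination} is proved there simply by invoking the complexity analysis of \cite{sun2002analytic} in the setting A1--A3, and your first ingredient is exactly the paper's upper bound --- evaluating the slacks at the center of the ball guaranteed by A2 gives $c_j - \langle D_j, P_{center}\rangle \geq \epsilon$ (since $P_{center}+\epsilon D_j\in\mathcal{F}$ and every cut is valid for $\mathcal{F}$), hence the minimum of the potential \eqref{eq:potential_fcn} satisfies $\phi(\tilde{\mathcal{F}}_k) \leq \phi\bigl(P_{center}\bigr) \approx k\log(1/\epsilon)$.

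Your second ingredient, however, misstates the mechanism, and this is a genuine gap in the self-contained argument you sketch. The analysis in \cite{sun2002analytic}, as summarized in the paper's proof, is not a per-iteration volume-decrease argument: it establishes a \emph{lower} bound on the same potential, $\phi(\tilde{\mathcal{F}}_k) \gtrsim \frac{k}{2}\log(k/n_x^3)$, which grows superlinearly in $k$, and termination is forced before this lower bound crosses the linear upper bound $k\log(1/\epsilon)$; solving $\frac{k}{2}\log(k/n_x^3) \leq k\log(1/\epsilon)$ gives $k \leq n_x^3/\epsilon^2$. Your replacement --- a per-step decrease of order $\epsilon^2$ in the log-volume of a Dikin ellipsoid, played against a volume floor $\kappa_d\,\epsilon^d$ with $d = n_x(n_x+1)/2$ --- does not reproduce this bound: the total log-volume budget from the initial box down to the floor is about $d\log(1/\epsilon)$, so your accounting yields an iteration count of order $n_x^2\log(1/\epsilon)/\epsilon^2$, which is not $O(n_x^3/\epsilon^2)$ (and exceeds it whenever $\log(1/\epsilon)\gg n_x$). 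Moreover, the ACCPM carries no guaranteed constant-factor (or even fixed $\epsilon^2$-sized) shrinkage of the localization set per iteration --- that property belongs to center-of-gravity and ellipsoid-type methods --- which is precisely why the published analysis compares growth rates of the potential rather than volumes. Since you ultimately defer the hard computation to \cite{sun2002analytic}, the theorem is still reachable from your outline, but the bridge you describe between your two ingredients is not the one that carries the proof, and taken literally it proves a different (weaker) bound.
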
 
\begin{proof}
	Sketch of proof: For the sequence of localization sets $\tilde{\mathcal{F}}_k$, \cite{sun2002analytic} computes an upper bound on the potential function $\phi(\tilde{\mathcal{F}}_k)$ which is approximately $k \log( \frac{1}{\epsilon})$, and a lower bound on $\phi(\tilde{\mathcal{F}}_k)$ which is proportional to $\frac{k}{2}\log(\frac{k}{n_x^3})$. Since the ACCPM must terminate before the lower bound exceeds the upper bound, we obtain the finite-step termination guarantee in Theorem~\ref{thm:ACCPM_termination}.
\end{proof}

The application of Theorem~\ref{thm:ACCPM_termination} to Algorithm~\ref{alg:ACCPM} is direct. In Algorithm~\ref{alg:ACCPM}, we choose $D_i = f_{cl}(x_*^{(i)}) f_{cl}(x_*^{(i)})^\top - x_*^{(i)} x_*^{(i), \top}$ where $x_*^{(i)}$ denotes the counterexample found at iteration $i$, and normalize it to be of unit norm. In order to be consistent with the ACCPM described in~\cite{sun2002analytic}, we relax the separating hyperplane from $\langle D_i , P \rangle \leq 0$ to $\langle D_i, P \rangle \leq \langle D_i, P^{(i)} \rangle$ (note $\langle D_i, P^{(i)} \rangle \geq 0$ by definition of $D_i$) with which we can directly apply Theorem~\ref{thm:ACCPM_termination} to guarantee that Algorithm~\ref{alg:ACCPM} terminates in at most $O({n_x}^3/\epsilon^2)$ iterations if the set of Lyapunov functions is full-dimensional in the parameter space. However, when the target set $\mathcal{F}$ is empty, we do not have such termination guarantee. In this case, the detection of non-existence of Lyapunov functions depends on finding an empty over-approximation $\tilde{\mathcal{F}}$.

\section{Extensions and variations}
\label{sec:extensions}
\subsection{Piecewise quadratic Lyapunov functions}
\label{sec:PWQLyap}
In the previous section, we used quadratic Lyapunov functions to verify the stability of the closed-loop system. When no globally quadratic Lyapunov function can be found or the results are too conservative, a natural extension is to consider \emph{piecewise quadratic} Lyapunov functions. In this section, we consider this class. Specifically, consider the following piecewise quadratic Lyapunov function candidate
\begin{equation}\label{eq:PWQ_Lyap}
V(x;P) = \begin{bmatrix}
x \\ f_{cl}(x)
\end{bmatrix}^\top P 
\begin{bmatrix}
x \\ f_{cl}(x)
\end{bmatrix},
\end{equation}
with $P \in \mathbb{S}^{2n_x}$. This function is a composition of a quadratic function and the PWA closed-loop dynamics $f_{cl}(x)$ and, hence, is a PWQ function. The parameterization in~\eqref{eq:PWQ_Lyap} is inspired by the non-monotonic Lyapunov function~\cite{ahmadi2008non} and finite-step Lyapunov function methods~\cite{aeyels1998new, bobiti2016sampling}, which relax the monotonically decreasing condition for Lyapunov functions by incorporating states several steps ahead in the construction of Lyapunov function candidates.

By the formulation in~\eqref{eq:PWQ_Lyap}, we parameterize a class of continuous PWQ functions with as many modes as those of the closed-loop system~\eqref{eq:cl_dynamics}, but with only a small-size parameter $P \in \mathbb{S}^{2n_x}$ in order to keep the complexity of running the ACCPM at a manageable level. Similar to Section~\ref{sec:ACCPM_Lyap}, let $\mathcal{F}$ be the set of parameter $P$ such that $V(x;P)$ satisfies the conditions~\eqref{eq:general_Lyap_cond_1} and~\eqref{eq:general_Lyap_cond_2}:
\begin{equation*}
%
\mathcal{F} = \{ P \in \mathbb{S}^{2 n_x} \vert \ 0 \prec P \preceq I, V(f_{cl}(x);P)-V(x;P) < 0, \ \forall x \in \mathcal{X} \setminus \{0  \} \}.
\end{equation*}
Since $\mathcal{F}$ is a convex set in $P$, we can apply the ACCPM to find a feasible point in $\mathcal{F}$.
\subsubsection{Design of the learner}
Consider the sample set with $N$ samples $\mathcal{S} = \{x^{1}, \cdots, x^N\}$. The Lyapunov difference for the closed-loop system is
\begin{equation}
\begin{aligned}
\Delta V(x,P) &= V(f_{cl}(x);P) - V(x;P) \\
& = \begin{bmatrix}
f_{cl}(x) \\ f_{cl}^{(2)}(x)
\end{bmatrix}^\top P 
\begin{bmatrix}
f_{cl}(x) \\ f_{cl}^{(2)}(x)
\end{bmatrix} - 
\begin{bmatrix}
x \\ f_{cl}(x)
\end{bmatrix}^\top P 
\begin{bmatrix}
x \\ f_{cl}(x)
\end{bmatrix}
\end{aligned} 
\end{equation}
where $f_{cl}^{(2)}(x) = f_{cl}(f_{cl}(x))$. The localization set $\tilde{\mathcal{F}}$ is thus given by
\begin{equation}
\tilde{\mathcal{F}} = \{ P \ \vert \ 0 \preceq P \preceq I,  \Delta V(x, P) \leq 0, \forall x \in \mathcal{S}  \},
\end{equation}
which contains $\mathcal{F}$. When $\mathcal{S} = \emptyset$, we have $\tilde{\mathcal{F}} = \{ P \ \vert \ 0 \preceq P \preceq I \}$. The learner finds the analytic center $P_{ac}$ of the localization set $\tilde{\mathcal{F}}$ by solving
\begin{equation} \label{eq:ac_pwq}
\begin{aligned}
\underset{P}{\text{minimize}}  \quad -\sum_{x \in \mathcal{S}} \log  (- \Delta V(x,P)) - \log \det (I - P) - \log \det (P) 
\end{aligned}
\end{equation}

If problem~\eqref{eq:ac_pwq} is infeasible, there is no valid Lyapunov function in the parameterized function class~\eqref{eq:PWQ_Lyap}; otherwise, the solution $P_{ac} =\arg \min \eqref{eq:ac_pwq}$ constructs the Lyapunov function candidate $V(x; P_{ac})$ proposed by the learner. Similarly, we can relax the separating hyperplane as shown in~\eqref{eq:potential_fcn} to apply the finite-step termination guarantee.

\subsubsection{Design of the verifier}
With the PWQ Lyapunov function candidate $V(x; P^{(i)})$ proposed by the learner at iteration $i$, the verifier solves the following MIQP:

\begin{subequations} 
	\allowdisplaybreaks
	\label{eq:pwq_oracle}
	\begin{align}
	\begin{split} \label{eq:pwq_oracle_obj}
	\underset{ \begin{subarray}{c}
		\{x^j\},\{u^j\}, \{\mu^j\}, \{z_\ell^j \}, \{t_\ell^j \}
		\end{subarray} }{\text{maximize}} & \quad 	\begin{bmatrix}
			x^1 \\ x^2
		\end{bmatrix}^\top P^{(i)}
		\begin{bmatrix}
		x^1 \\ x^2
		\end{bmatrix} -
		\begin{bmatrix}
		x^0 \\ x^1
		\end{bmatrix}^\top P^{(i)} 
		\begin{bmatrix}
		x^0 \\ x^1
		\end{bmatrix}
	\end{split}\\
	\begin{split}  \label{eq:pwq_oracle_cond_1}
	\text{subject to} 
	& \quad  F_\mathcal{X} x^0 \leq h_\mathcal{X}
	\end{split}\\
	\begin{split} \label{eq:pwq_oracle_cond_guard}
	&\quad \lVert x^0 \rVert_\infty \geq \epsilon
	\end{split}\\
	\begin{split} \nonumber
	& \quad \text{for } j = 0, 1: 
	\end{split}\\
	\begin{split} \label{eq:pwq_oracle_cond_4}
	& \quad \quad \quad z^j_0 = x^j
	\end{split}\\
	\begin{split}
	& \quad \quad \quad \text{for } \ell = 0, \cdots, L-1:
	\end{split}\\
	\begin{split}
	& \quad \quad \quad \quad \quad z_\ell^j \geq 0, \ z_\ell^j \geq W_\ell z_{\ell -1}^j + b_\ell 
	\end{split}\\
	\begin{split}
	& \quad \quad \quad \quad \quad z_\ell^j \leq W_\ell z_{\ell - 1}^j + b_\ell - \text{diag}(\underline{m}_\ell^j) (1 - t_\ell^j) 
	\end{split}\\
	\begin{split} 
	& \quad \quad \quad \quad \quad z_\ell^j \leq \bar{m}_\ell^j t_\ell^j, \ t_\ell^j \in \{0, 1 \}^{n_\ell} 
	\end{split}\\
	\begin{split} \label{eq:pwq_oracle_cond_5}
	& \quad \quad \quad u^j = W_L z_L^j + b_L
	\end{split}\\
	\begin{split} \label{eq:pwq_oracle_cond_2}
	& \quad \quad \quad F_i x_i^j \leq \mu_i^j h_i, G_u u_i^j \leq \mu_i^j h_u, \ \forall i \in \mathcal{I} 
	\end{split}\\
	\begin{split} 
	& \quad \quad \quad (1, x^j, u^j, x^{j+1}) = \sum_{i \in \mathcal{I}} (\mu_i^j, x_i^j, u_i^j, A_i x_i^j + B_i u_i^j + \mu_i^j c_i)
	\end{split}\\
	\begin{split} \label{eq:pwq_oracle_cond_3}
	& \quad \quad \quad \mu_i^j \in \{0, 1\}, \forall i \in \mathcal{I}
	\end{split}
	\end{align}
\end{subequations}
where we evolve the closed-loop system dynamics for two steps. For each state $x^0 \in \mathcal{X} \setminus B_\epsilon$ (constraint~\eqref{eq:pwq_oracle_cond_1} and~\eqref{eq:pwq_oracle_cond_guard}), constraints~\eqref{eq:pwq_oracle_cond_2} to~\eqref{eq:pwq_oracle_cond_3} model the PWA dynamics $x^1 = \psi(x^0, u^0), x^2 = \psi(x^1, u^1)$ while constraints~\eqref{eq:pwq_oracle_cond_4} to~\eqref{eq:pwq_oracle_cond_5} describe the neural network controller $u^0 = \pi(x^0), u^1 = \pi(x^1)$. The interpretation of the solution of problem~\eqref{eq:pwq_oracle} is the same as that in Section~\ref{sec:verifier_MIQP}.


It is straight forward to parameterize more complex PWQ Lyapunov function candidate classes by concatenating more `future' states $f_{cl}^{(i)}(x), i = 1, 2, \cdots, k$ in~\eqref{eq:PWQ_Lyap}. This only requires modification of the verifier~\eqref{eq:pwq_oracle} by cloning the MIL constraints for $k+1$ times and choosing the ROI $\mathcal{X}$ accordingly to guarantee that $f_{cl}^{(i)}(x)$ is well-defined.


\subsection{Neural network controllers with projection}
\label{sec:constrained_nn}

In Section~\ref{sec:ACCPM_Lyap}, we studied the closed-loop stability of the PWA system~\eqref{eq:hybrid_dyn} with a neural network controller $u =\pi(x)$. In practical applications, we often encounter hard constraints on the state and on the control input (e.g., actuator saturation). In this section, we consider neural networks with a projection layer in feedback interconnection with the LTI system~\eqref{eq:LTI_dyn} with state-space matrices $(A, B)$. Specifically, we assume the projected neural network satisfies the following constraints:
\begin{itemize}
	\item \emph{Control input constraint:} the control input $u$ has to satisfy the polytopic constraint
	\begin{equation}\label{eq:control_constraint}
	u \in \mathcal{U} = \{ u \in \mathbb{R}^{n_u} \vert C_u u \leq d_u\}.
	\end{equation}
	\item \emph{Positive invariance constraint:} the region of interest $\mathcal{X} = \{ x \in \mathbb{R}^{n_x} \vert F_\mathcal{X} x \leq h_\mathcal{X}\} \subseteq \mathcal{R}$ is positive invariant for the closed-loop system.
\end{itemize}
While the control input constraint can always be satisfied by projecting $\pi(x)$ onto $\mathcal{U}$, to make the ROI $\mathcal{X}$ positive invariant we require $\mathcal{X}$ to be a \emph{control invariant set} under the control input constraint $\mathcal{U}$.
\begin{definition}[Control invariant set]
	A set $\mathcal{C} \subseteq \mathcal{R}$ is said to be a control invariant set for the LTI system $x_+ = Ax + Bu$ subject to the control input constraint~\eqref{eq:control_constraint} if
	\begin{equation*}
	\exists u \in \mathcal{U} \text{ such that } x_+ = Ax+Bu \in \mathcal{C}, \quad \forall x \in \mathcal{C}.
	\end{equation*}
\end{definition}
\begin{assumption} \label{assump:control_invariant_set}
	The region of interest $\mathcal{X}$ is a control invariant set for the LTI system~\eqref{eq:LTI_dyn} with control input constraints~\eqref{eq:control_constraint}.
\end{assumption}

\begin{remark}
	While identifying a positively invariant set for the nonlinear closed-loop autonomous system~\eqref{eq:cl_dynamics} is challenging, finding a control invariant set for the LTI system~\eqref{eq:LTI_dyn} with a polytopic control input constraint $\mathcal{U}$ can be done through an iterative algorithm~\cite{borrelli2017predictive}.
\end{remark}
To satisfy the positive invariance constraint, we project the neural network output $u = \pi(x), \ x \in \mathcal{X}$ onto the state-dependent polyhedron
\begin{equation} \label{eq:project_polytope}
\Omega(x) = \{ u \in \mathbb{R}^{n_u} \vert Ax + Bu \in \mathcal{X}, u \in \mathcal{U} \}.
\end{equation}
As a result, the projected control input $u^p = \pi_{proj}(x) := \Proj_{\Omega(x)}(\pi(x))$ is given by the optimal solution to the following convex quadratic program,
\begin{equation} \label{eq:projection}
\begin{aligned}
u^p = \underset{u}{\arg \min} & \quad \frac{1}{2} \lVert u - \pi(x) \rVert_2^2 \\
\text{subject to } & \quad F_\mathcal{X} B u \leq h_\mathcal{X} - F_\mathcal{X} A x\\
& \quad C_u u \leq d_u.
\end{aligned}
\end{equation}
From the definition of $\Omega(x)$, the projected NN controller $\pi_{proj}(x)$ ensures that $\pi_{proj}(x) \in \mathcal{U}$ and $x_+ = Ax + B\pi_{proj}(x) \in \mathcal{X}$ for all $x \in \mathcal{X}$. Hence, the ROI $\mathcal{X}$ is rendered positive invariant under $\pi_{proj}(x)$. The application of the projected NN controller to guarantee constraint satisfaction or set invariance can be found in~\cite{chen2018approximating}. However, the authors do not verify the closed-loop stability under the projected NN controller. In the following, we modify the ACCPM to account for control input projection.

\subsubsection{ACCPM for the projected controller}
Since $\pi_{proj}(x)$ guarantees $\mathcal{X}$ is positive invariant for the closed-loop system, we can verify $\mathcal{X} \subseteq \mathcal{O}$ by searching over quadratic Lyapunov functions through Algorithm~\ref{alg:ACCPM}. For the learner, the analytic center optimization problem~\eqref{eq:ac_optimization} remains unchanged except that the new closed-loop dynamics are applied. For the verifier, we need to construct a new MIQP that reflects the projection operation in~\eqref{eq:projection}.

We observe that the projected control input $u^p$ is the optimal solution to the convex quadratic program~\eqref{eq:projection}. By the KKT conditions, $u^p$ is the solution to the following system of inequalities and equalities:
\begin{subequations}\label{eq:KKT}
	\begin{align}
	\begin{split} \label{eq:KKT_1}
	u^p - \pi(x) + (F_\mathcal{X}B)^\tp \lambda + C_u^\tp \nu = 0
	\end{split} \\
	\begin{split}
	F_\mathcal{X} B u^p + F_\mathcal{X} A x \leq h_\mathcal{X}
	\end{split} \\
	\begin{split}
	C_u u^p \leq d_u 
	\end{split} \\
	\begin{split}\label{eq:KKT_2}
	\lambda \succeq 0, \nu \succeq 0
	\end{split} \\
	\begin{split}\label{eq:KKT_3}
	\lambda^\tp(F_\mathcal{X}B u^p + F_\mathcal{X}Ax - h_\mathcal{X}) = 0
	\end{split} \\
	\begin{split}\label{eq:KKT_4}
	\nu^\tp (C_u u^p - d_u) = 0.
	\end{split}
	\end{align}
\end{subequations}
%
%
Let $n_f$ and $n_c$ be the number of rows of $F_\mathcal{X}$ and $C_u$, respectively. Then $\lambda \in \mathbb{R}^{n_{f}}$, $\nu \in \mathbb{R}^{n_{c}}$ are the Lagrangian dual variables. Constraints~\eqref{eq:KKT_3} and~\eqref{eq:KKT_4} are bilinear but they can be modeled as mixed-integer linear constraints through the big-$M$ method~\cite{simon2016stability}:
\begin{equation} \label{eq:binary_KKT}
\begin{aligned}
& 0 \leq \lambda_i \leq M t_{\lambda, i}, \quad t_{\lambda, i} \in \{0, 1\} \\
& 0 \leq (h_\mathcal{X} - F_\mathcal{X} B u^p - F_\mathcal{X} A x)_i \leq M (1- t_{\lambda, i}), \quad i = 1, \cdots, n_f \\
& 0 \leq \nu_j \leq M t_{\nu, j}, \quad t_{\nu, j} \in \{0, 1\} \\
& 0 \leq (d_u - C_u u^p)_j \leq M (1 - t_{\nu, j}), \quad j = 1, \cdots, n_c
\end{aligned}
\end{equation}
where the subscript $i$ denotes the $i$-th entry of a vector. In~\eqref{eq:binary_KKT}, a single $M$ is chosen large enough for all relevant constraints for simplicity of exposition; however, tighter element-wise lower and upper bounds can be obtained using linear programming. Then for the Lyapunov function candidate $V(x; P^{(i)})$, the verifier for the projected NN controller is given by the following MIQP:
\begin{equation} \label{eq:MIP_proj}
\begin{aligned}
\underset{x, y, u, u^p, \{z_\ell \}, \{ t_\ell \}, t_\lambda, t_\nu }{\text{maximize}} & \quad y^\top P^{(i)} y - x^\top P^{(i)} x \\
\text{subject to } & \quad \eqref{eq:pwa_MIQP}, \quad \eqref{eq:KKT_1} - \eqref{eq:KKT_2}, \quad \eqref{eq:binary_KKT}
\end{aligned}
\end{equation}
where the variable $\pi(x)$ is replaced by $u$ in~\eqref{eq:KKT_1}. The interpretation of~\eqref{eq:MIP_proj} is exactly the same as that of~\eqref{eq:pwa_MIQP}. With the MIQP verifier~\eqref{eq:MIP_proj}, Algorithm~\ref{alg:ACCPM} achieves the termination guarantee described in Section~\ref{sec:ACCPM_termination}. If a valid Lyapunov function is found by the ACCPM, then we verify that $\mathcal{X} \subseteq \mathcal{O}$.

\subsubsection{Projected NN controllers for PWA systems}
The extension of the ACCPM to the analysis of projected neural network controller on a PWA system, however, is not tractable. Although in real world applications, we can project $\pi(x)$ onto $\Omega(x) = \{u \in \mathbb{R}^{n_u} \vert A_{i(x)} x + B_{i(x)} u \in \mathcal{X}, u \in \mathcal{U}\}$ where $i(x)$ is a function mapping $x$ to the mode of the PWA system, we cannot analyze the closed-loop stability with such projected NN controller through ACCPM. The reason is that $u^p = \Proj(\pi(x))$ now is implicitly given by the solution of an MIQP instead of a convex QP~\eqref{eq:projection} in the LTI system example. This prevents us from finding a mixed-integer formulation of $u^p = \Proj(\pi(x))$ through the KKT conditions. 

If only the control input constraint~\eqref{eq:control_constraint} is considered, which means $\pi(x)$ is projected onto $\mathcal{U} = \{u \vert C_u u \leq d_u\}$, the same procedure described in this section can be applied to construct a verifier with the MIL formulation~\eqref{eq:sharp_graph} describing the PWA dynamics and the KKT conditions describing the map $x \mapsto u^p$, and the closed-loop stability can be analyzed through Algorithm~\ref{alg:ACCPM}.



\section{Numerical examples}
\label{sec:numerical}

We demonstrate the application of the ACCPM through two numerical examples: one is a double integrator LTI system and the other is a PWA system modeling an inverted pendulum in contact with an elastic wall. Following from approximate model predictive control (MPC)~\cite{chen2018approximating, karg2020efficient}, we first synthesize an MPC controller $u = \pi_{MPC}(x)$ for the underlying system, and then train a ReLU NN $\pi(x)$ through supervised learning to approximate the MPC controller, i.e., $\pi(x) \approx \pi_{MPC}(x)$. The closed-loop stability of the NN-controlled system is analyzed by synthesizing a Lyapunov function through Algorithm~\ref{alg:ACCPM}. All the simulations are implemented in Python $3.7$ with Gurobi v9.0~\cite{gurobi} on an Intel i7-6700K CPU.

\subsection{A double integrator example}
\label{sec:DI_example}
We first consider model predictive control of the LTI system $x_+ = Ax + Bu, x \in \mathbb{R}^2$ with state and control input constraints $X$ and $U$:
\begin{equation} \label{eq:DI_dynamics}
A = \begin{bmatrix}
1.1 & 1.1 \\ 0 & 1.1 
\end{bmatrix}, \quad B = \begin{bmatrix}
1 \\ 0.5
\end{bmatrix}, \quad X = \{ x \vert \begin{bmatrix}
-5  \\ -5
\end{bmatrix} \leq x \leq \begin{bmatrix}
5 \\ 5
\end{bmatrix} \}, \quad U = \{u \vert  -1 \leq u \leq 1 \}.
\end{equation}
The MPC horizon is given by $T = 20$ together with the stage cost $q(x, u) = x^\top Q x + u^\top R u$, $Q = \text{diag}(1, 1)$, $R = 1$ and terminal cost $p(x) = x^\top P_\infty x$, $P_\infty = \text{DARE}(A, B, Q, R)$, i.e., $P_\infty$ is the solution to the discrete algebraic Riccati equation defined by $(A, B, Q, R)$. The terminal set $X_T$ is chosen as the maximum positive invariant set~\cite[Chapter 10]{borrelli2017predictive} of the closed-loop system $x_+ = (A + B K_\infty) x$ where $K_\infty = -(B^\top P_\infty B + R)^{-1}B^\top P_\infty A$. The choice of $Q, R, P_\infty, X_T$ guarantees the asymptotic stability of the closed-loop system $x_+ = Ax + B\pi_{mpc}(x)$ with a polytopic ROA shown in Fig.~\ref{fig:MPC_ROA} \cite[Chapter 12]{borrelli2017predictive}.

\subsubsection{Synthesis of a NN controller}

By the explicit MPC approach~\cite{bemporad2002explicit, MPT3}, the MPC controller, denoted by $\pi_{mpc}(x)$, is a PWA function with polyhedral partitions as shown in Fig.~\ref{fig:MPC_controller}. By construction of the MPC controller, the ROA of the closed-loop system is a polytopic set shown in Fig.~\ref{fig:MPC_ROA}. We denote the ROA as $\mathcal{X}_0 := \{x \vert F_{\mathcal{X}_0}x \leq h_{\mathcal{X}_0}\}$ and use it as a reference region of interest. 

To obtain a neural network approximate of $\pi_{mpc}(x)$, we sample $650$ states uniformly from $\mathcal{X}_0$ and compute $\pi_{mpc}(x)$ at these samples as the training data. Through supervised learning with Keras~\cite{chollet2015keras}, we obtain a ReLU NN controller $\pi(x)$ with $3$ hidden layers and $10$ neurons in each layer to approximate $\pi_{mpc}(x)$. The bias term in the output layer of the NN is modified such that $\pi(0)=0$. The ReLU NN is visualized in Fig.~\ref{fig:NN_controller}. Next, we want to verify the stability of double integrator system under the NN controller $\pi(x)$.

\begin{figure}[htb!]
	\centering
	\begin{subfigure}{0.32 \textwidth}
		\includegraphics[width = \linewidth]{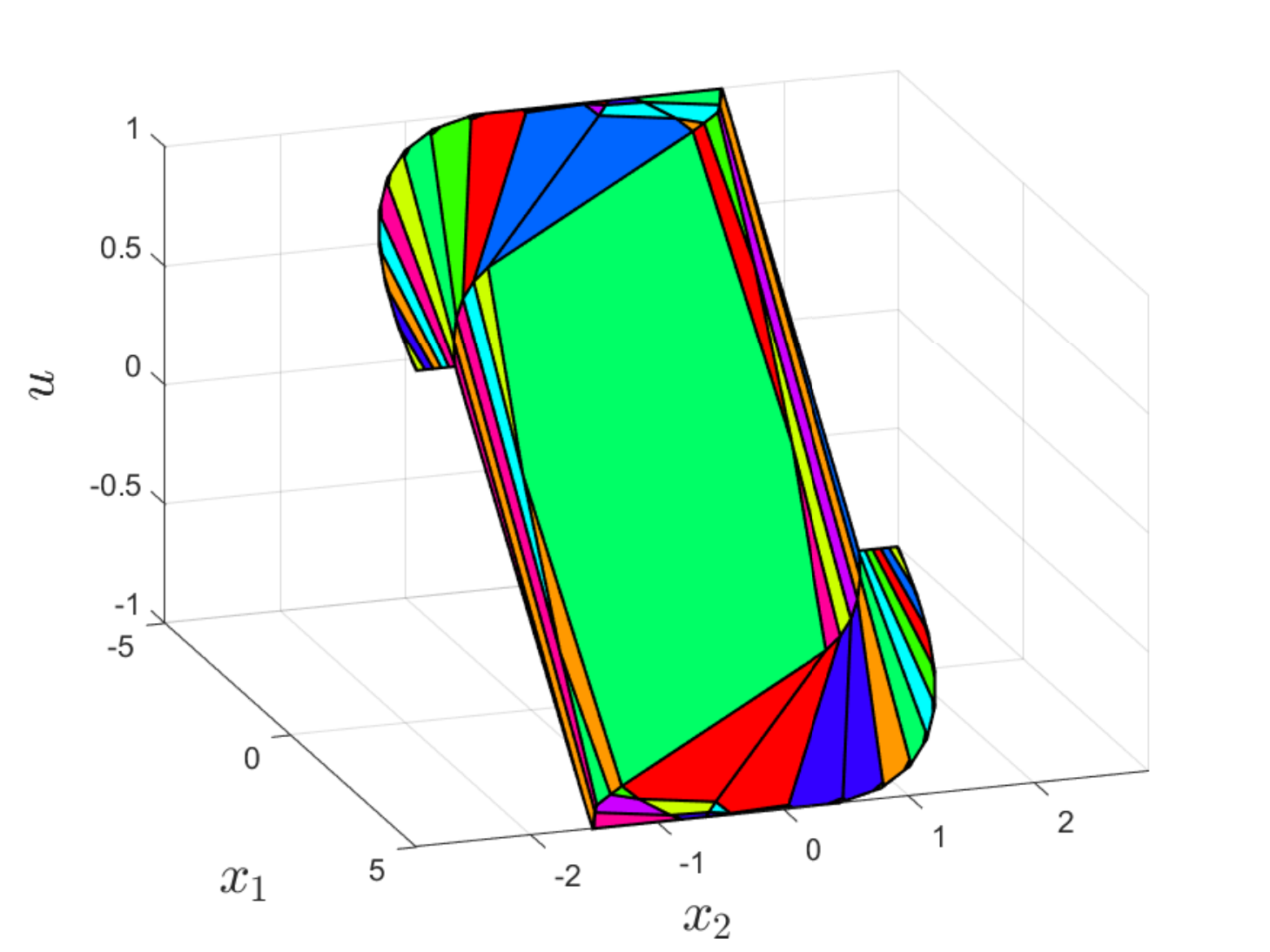}
		\caption{MPC controller}
		\label{fig:MPC_controller}
	\end{subfigure}
	\begin{subfigure}{0.32 \textwidth}
		\includegraphics[width =  \linewidth]{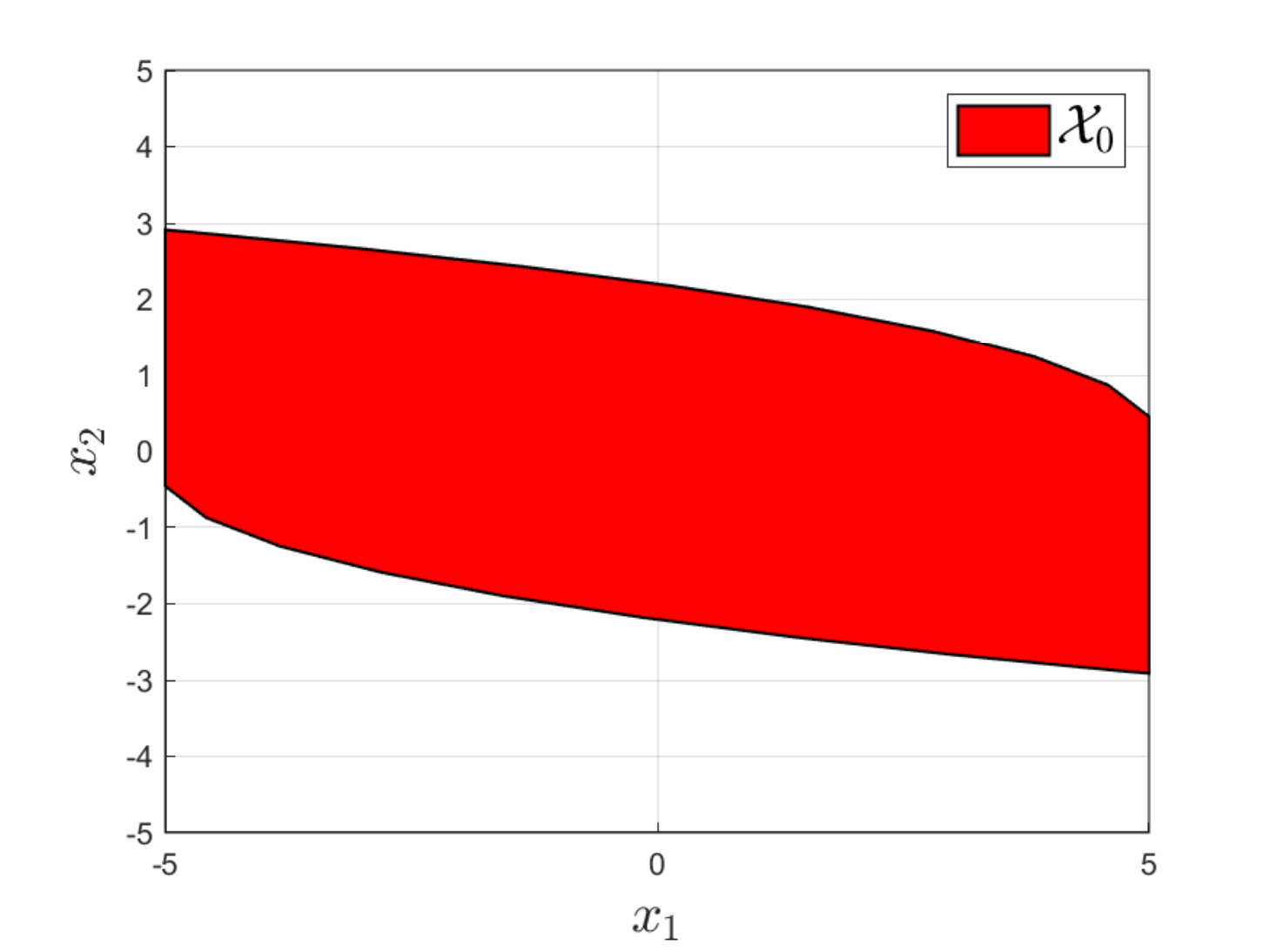}
		\caption{ROA of the MPC controller}
		\label{fig:MPC_ROA}
	\end{subfigure}
	\begin{subfigure}{0.32 \textwidth}
		\includegraphics[width =  \linewidth]{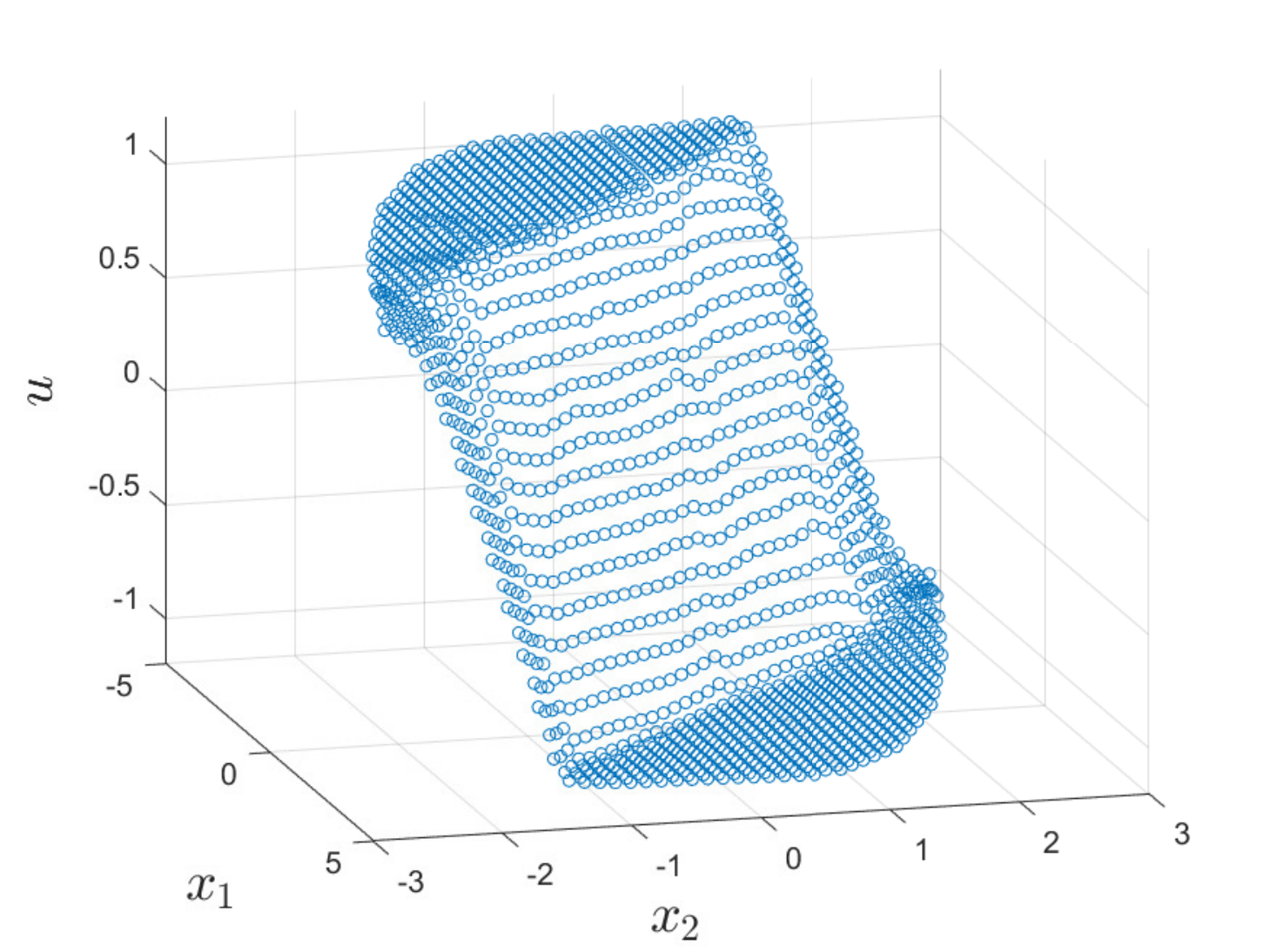}
		\caption{Neural network controller}
		\label{fig:NN_controller}
	\end{subfigure} \hfil
	\caption[Caption for LOF]{The explicit MPC controller (left) is approximated by a neural network controller (right). The ROA of the MPC controller $\mathcal{X}_0$ (middle) is used as the ROI reference to guide the search for Lyapunov functions.\footnotemark}
	\label{fig:MPC}
\end{figure}
\footnotetext{Fig.~\ref{fig:MPC_controller} and Fig.~\ref{fig:MPC_ROA} are generated through the MPT3 toolbox~\cite{MPT3} in MATLAB R2019b.}
\subsubsection{Estimate ROA through Quadratic/PWQ Lyapunov functions}
For the closed-loop system of the double integrator interconnected with $\pi(x)$, we first apply the ACCPM to search for a quadratic Lyapunov function $V(x;P) = x^\top P x$ with $P \in \mathbb{S}^2_{++}$. Since the set $\mathcal{X}_0$ (Fig.~\ref{fig:MPC_ROA}) is the ROA of the MPC controller, the ROI is chosen as $\mathcal{X} = \gamma \mathcal{X}_0 := \{x \vert F_{\mathcal{X}_0} x \leq \gamma h_{\mathcal{X}_0}\}$ with a scaling variable $0 <\gamma\leq 1$ to guide our search of Lyapunov functions. Since the estimate of ROA $\tilde{O}$ is always contained in the ROI, we want $\mathcal{X}$ to be as large as possible as long as the ACCPM can find a Lyapunov function.

In the ACCPM implementation, the learner solves the convex program~\eqref{eq:ac_optimization} through CVXPY~\cite{diamond2016cvxpy} with MOSEK~\cite{mosek} as the solver, and the verifier solves the MIQP~\eqref{eq:pwa_MIQP} through Gurobi~\cite{gurobi}. We set $\epsilon = 0.0172$ in the MIQP~\eqref{eq:pwa_MIQP} and denote $B_\epsilon$ the $\ell_\infty$-norm ball centered at the origin with radius $\epsilon$. Inside $B_\epsilon$, the closed-loop dynamics is given by 
\begin{equation*}
x_+ = A_{cl} x= \begin{bmatrix}
0.50355752 &  0.02697626\\ -0.29822124 &  0.56348813
\end{bmatrix}x
\end{equation*}
Since $A_{cl}$ has eigenvalues $0.53352282  \pm 0.08453977 i$, with norm less than $1$, the closed-loop system is asymptotically stable inside $B_\epsilon$. The ACCPM starts with an empty initial sample set $\mathcal{S}_0 = \emptyset$. Through bisection, the largest scaling variable is given by $\gamma = 0.89$, i.e., we can find a valid quadratic Lyapunov function through the ACCPM with $\mathcal{X} = 0.89 \mathcal{X}_0$. After $10$ iterations, the ACCPM terminated with total solver time $9.788$ seconds and found a valid quadratic Lyapunov function $V(x) = x^\top P x$ with 
\begin{equation} \label{eq:DI_P_value}
P = \begin{bmatrix}
0.14240707 & 0.02797589 \\0.02797589 & 0.78732241
\end{bmatrix}
\end{equation}
together with an estimate of ROA shown in Fig.~\ref{fig:DI_quad_ROA}. In running the ACCPM, we show the optimal values of the MIQP~\eqref{eq:pwa_MIQP} and the counterexamples found by the verifier in Fig.~\ref{fig:DI_history}. At termination, the optimal solution of the MIQP~\eqref{eq:pwa_MIQP} is $x_* = (-0.0172,  0.0048)$ (red star in Fig.~\ref{fig:DI_state_history}) which lies on the boundary of $B_{\epsilon}$ with optimal value $p^* = -7.083 \times 10^{-7} < 0$. This certifies that the terminating parameter $P$ in~\eqref{eq:DI_P_value} generates a Lyapunov function.  


\begin{figure}[htb!]
	\centering
	\begin{subfigure}{0.49 \textwidth}
		\includegraphics[width = \linewidth]{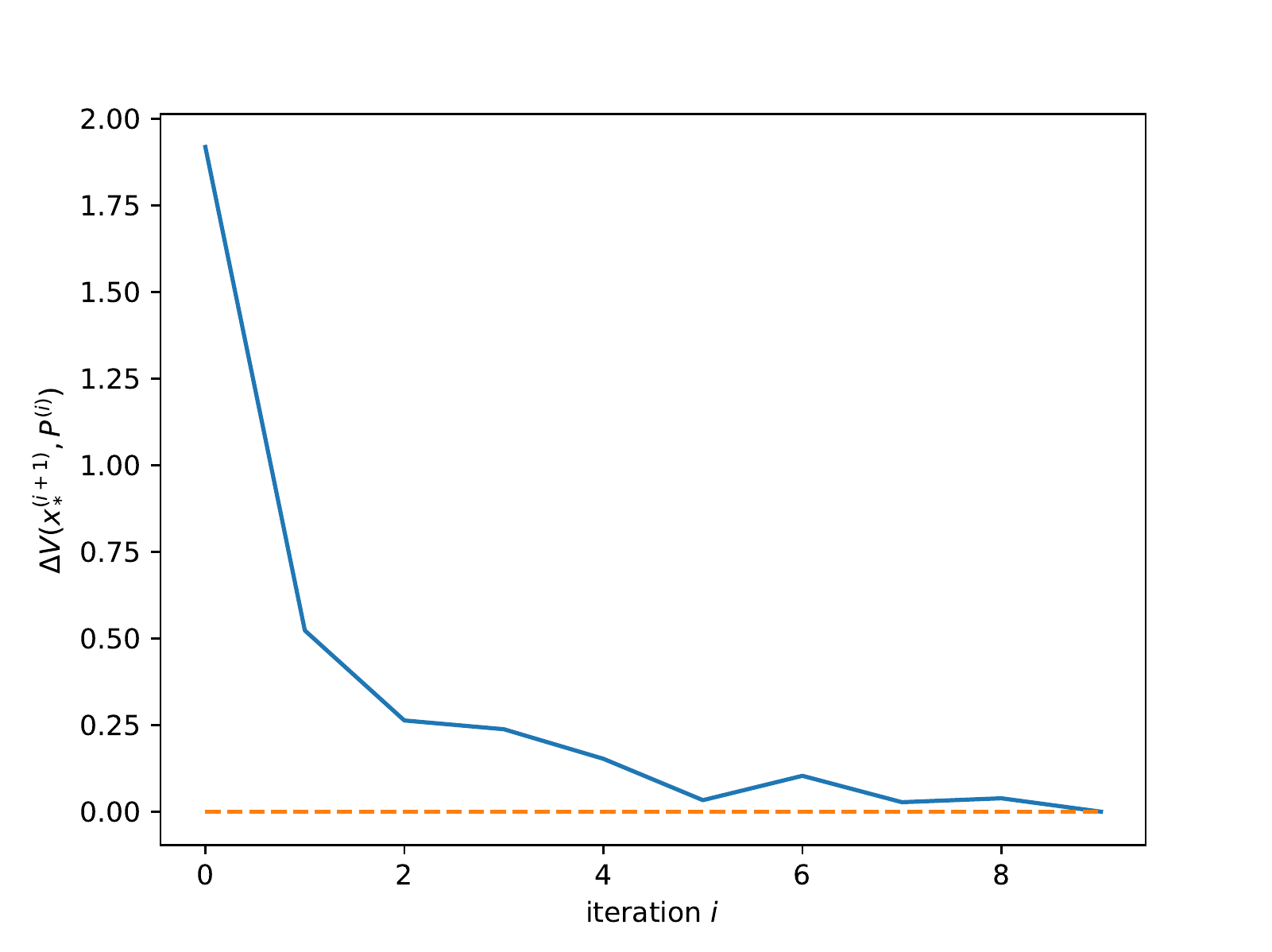}
		\caption{Optimal value of the MIQP~\eqref{eq:pwa_MIQP}}
		\label{fig:DI_value_history}
	\end{subfigure}
	\begin{subfigure}{0.49 \textwidth}
		\includegraphics[width =  \linewidth]{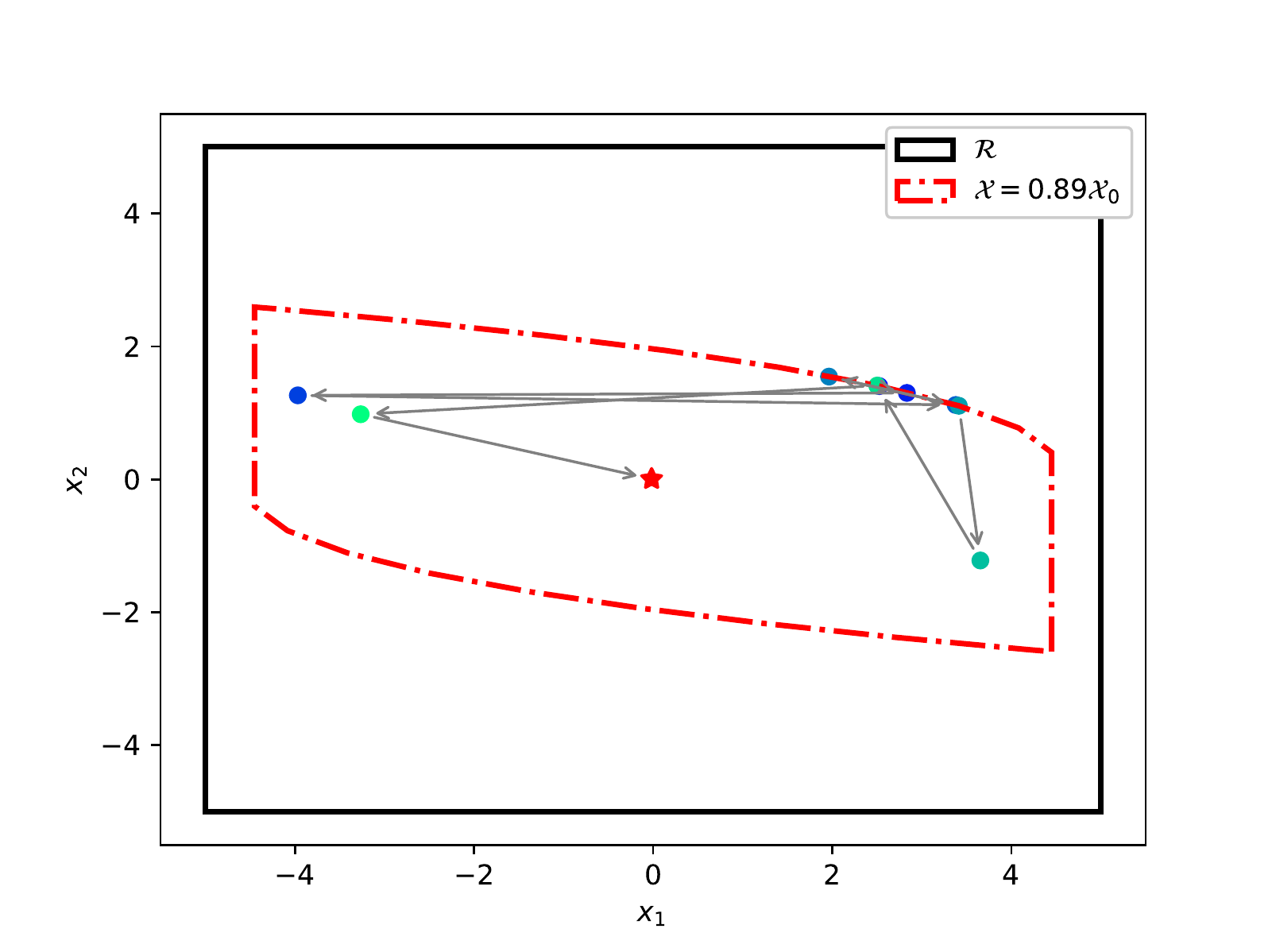}
		\caption{Sequence of counterexamples}
		\label{fig:DI_state_history}
	\end{subfigure} \hfil
	\caption[Caption for LOF]{For the double integrator system with ROI $\mathcal{X} = 0.89 \mathcal{X}_0$ and quadratic Lyapunov function candidates, the ACCPM terminates in $10$ iterations. Fig.~\ref{fig:DI_value_history} plots the optimal value of the MIQP~\eqref{eq:pwa_MIQP} solved by the verifier. Fig.~\ref{fig:DI_state_history} shows the counterexamples found in each iteration with the optimal solution of~\eqref{eq:pwa_MIQP} in the last iteration marked by the red star.} 
	\label{fig:DI_history}
\end{figure}

The same procedure is applied to searching PWQ Lyapunov functions with $P \in \mathbb{S}^{4}$. The ACCPM is initialized with $\mathcal{S}_0 = \emptyset$ and through bisection on $\gamma$ we set $\mathcal{X} = 1.0 \mathcal{X}_0$. In this case, the ACCPM terminates in $5$ iterations with total solver time $51.398$ seconds, and generates a Lyapunov function with
\begin{equation}
P = \begin{bmatrix}
0.27336075 &  0.04068795 & -0.1110683  & 0.09147914 \\
0.04068795 & 0.79551997 & 0.02651292 & 0.15342152 \\
-0.1110683 &  0.02651292 &  0.13499793 & -0.10682033\\
 0.09147914 &  0.15342152 & -0.10682033 & 0.48064250 
\end{bmatrix}.
\end{equation}
The corresponding estimate of ROA given by the PWQ Lyapunov function is shown in Fig.~\ref{fig:DI_pwq_ROA}. Compared with the quadratic Lyapunov function example in Fig.~\ref{fig:DI_quad_ROA}, the application of PWQ Lyapunov functions enlarges the estimate ROA found by the ACCPM.

\begin{figure}[htb!]
	\centering
	\begin{subfigure}{0.49\textwidth}
		\centering
		\includegraphics[width = 0.99 \linewidth]{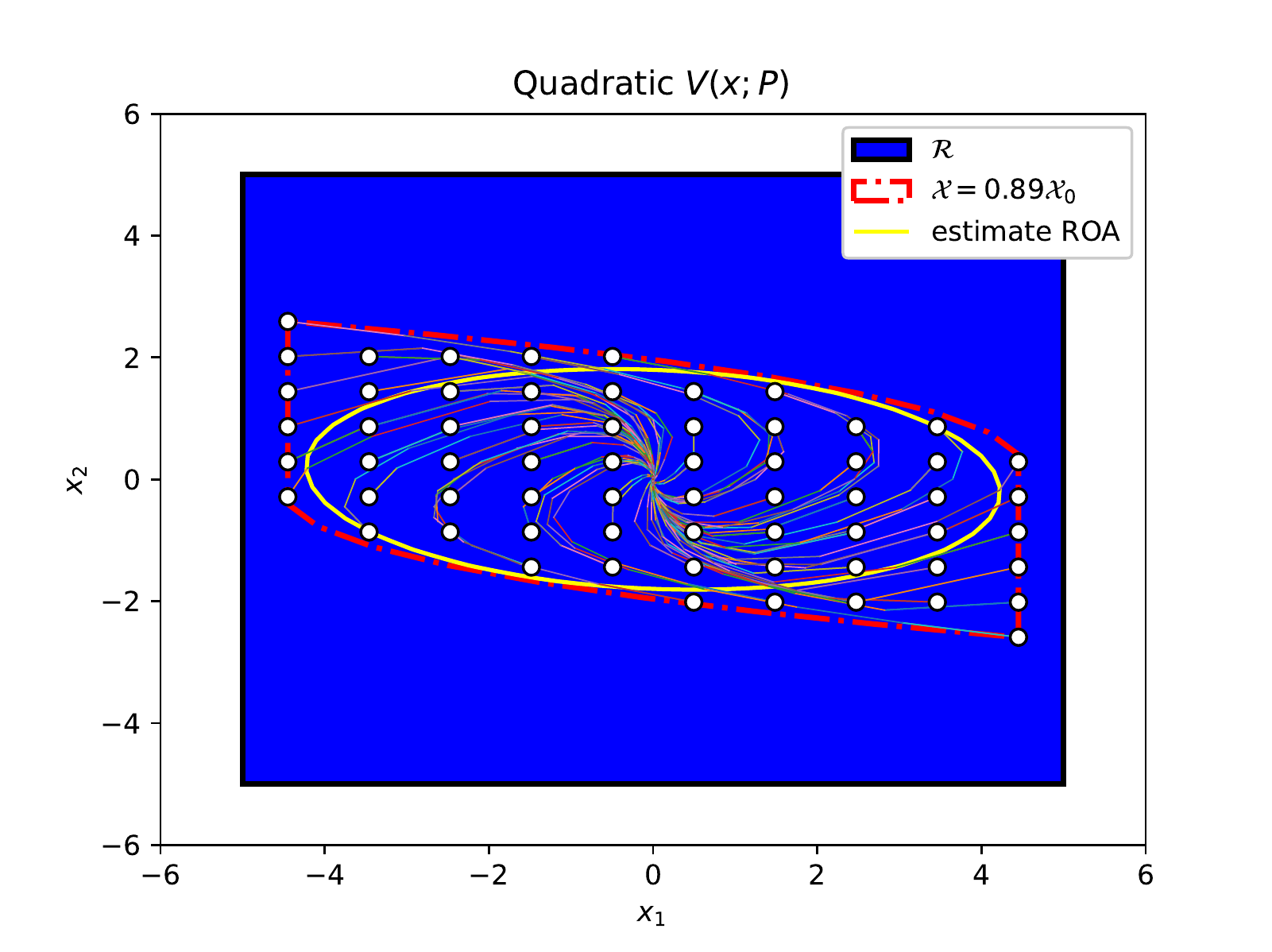}
		\caption{Estimate of ROA by a quadratic Lyapunov function. }
		\label{fig:DI_quad_ROA}
	\end{subfigure} \hfil
	\begin{subfigure}{0.49\textwidth}
		\centering
		\includegraphics[width = 0.99 \linewidth]{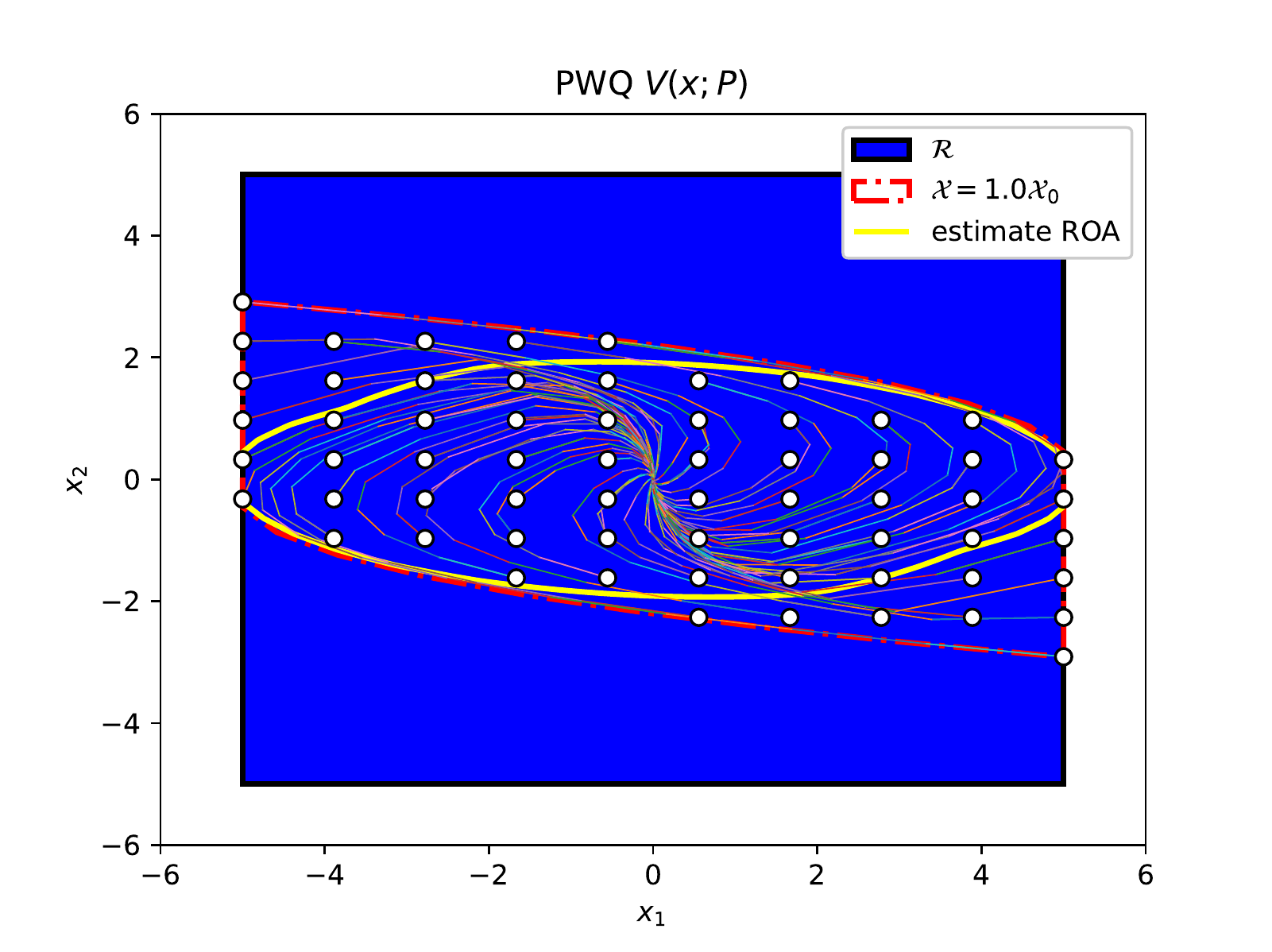}
		\caption{Estimate of ROA by a PWQ Lyapunov function. }
		\label{fig:DI_pwq_ROA}
	\end{subfigure} 
	\caption{Left: with $\mathcal{X}= 0.89 \mathcal{X}_0$, the ACCPM found a quadratic Lyapunov function with an ellipsoidal estimate of ROA (yellow) for $\pi(x)$. Right: with the PWQ Lyapunov function candidate parameterized in~\eqref{eq:PWQ_Lyap}, the ACCPM found a valid candidate with $\mathcal{X}= 1.0 \mathcal{X}_0$ and an estimate ROA (yellow). Simulated closed-loop trajectories with the NN controller are plotted for a grid of initial conditions.}
\end{figure}

\subsubsection{NN controller with projection}
We project the synthesized NN controller $\pi(x)$ to the state-dependent polytope $\Omega(x)$ shown in~\eqref{eq:project_polytope} with $\mathcal{X} = \mathcal{X}_0, \mathcal{U} = U$ since $\mathcal{X}_0$ is a control invariant set under the control input constraint $U$ in~\eqref{eq:DI_dynamics} by construction. By applying the MIQP~\eqref{eq:MIP_proj} as the verifier, the ACCPM found a PWQ Lyapunov function $V(x;P)$ with the initial sample set $\mathcal{S}_0 = \emptyset$ in $5$ iterations and with total solver time $64.470$ seconds. Thus, we certify that $\mathcal{X}_0 \subseteq \mathcal{O}$ when the NN controller with projection is applied. As shown in Fig.~\ref{fig:DI_proj_ROA}, the corresponding estimate of ROA now becomes a polytope and preserves the asymptotic stability guarantee of the MPC controller constructed in Section~\ref{sec:DI_example}.
 
\begin{figure}[htb!]
	\centering
	\includegraphics[width = 0.5 \textwidth]{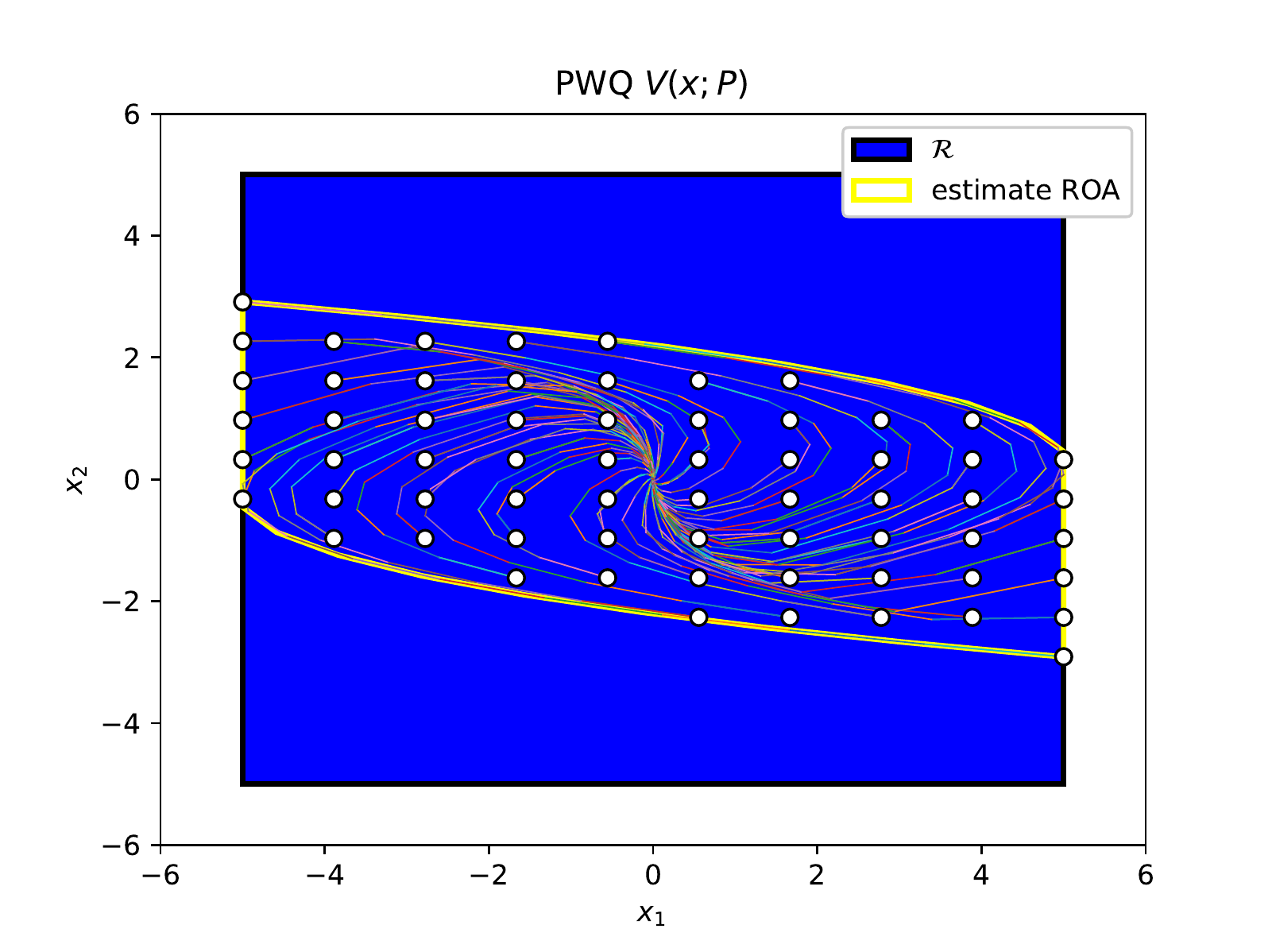}
	\caption{For the NN controller with projection, the polytopic ROI $\mathcal{X} = \mathcal{X}_0$ is an estimate of ROA.}
	\label{fig:DI_proj_ROA}
\end{figure}

\subsection{Inverted pendulum with an elastic wall}
We use a hybrid system example from~\cite{marcucci2017approximate} and consider the inverted pendulum shown in Fig.~\ref{fig:inverted_pendulum} with parameters $m = 1, \ell = 1, g = 10, k = 100, d = 0.1, h = 0.01$. The state is $x = (q, \dot{q})$ which represents the angle $q$ and angular velocity $\dot{q}$ of the pendulum. By linearizing the	 dynamics of the inverted pendulum around $x = 0$, we obtain a hybrid system which has two modes: not in contact with the elastic wall (mode $1$) and in contact with the elastic wall (mode $2$). After discretizing the model using the explicit Euler scheme with a sampling time $h = 0.01$, a PWA model $x_+ = \psi(x, u)$ in the form~\eqref{eq:hybrid_dyn} is obtained with the following parameters
\begin{equation} \label{eq:pendulum_dynamics}
\begin{aligned}
& A_1 = \begin{bmatrix}
1 & 0.01 \\ 0.1 & 1
\end{bmatrix}, B_1 = \begin{bmatrix}
0 \\ 0.01 
\end{bmatrix}, c_1 = \begin{bmatrix}
0 \\ 0
\end{bmatrix}, \mathcal{R}_1 = \{x \vert (-0.2,-1.5) \leq x \leq (0.1, 1.5)\}
\\ 
& A_2 = \begin{bmatrix}
1 & 0.01\\
-0.9 & 1 
\end{bmatrix}, B_2 = \begin{bmatrix}
0 \\ 0.01
\end{bmatrix}, c_2 = \begin{bmatrix}
0 \\ 0.1
\end{bmatrix}, \mathcal{R}_2 = \{x \vert (0.1,-1.5) \leq x \leq (0.2, 1.5)\}
\end{aligned}
\end{equation}

\begin{figure}[htb!]
	\centering
	\begin{subfigure}[t]{0.49 \textwidth}
		\centering
		\includegraphics[width = 0.65 \linewidth]{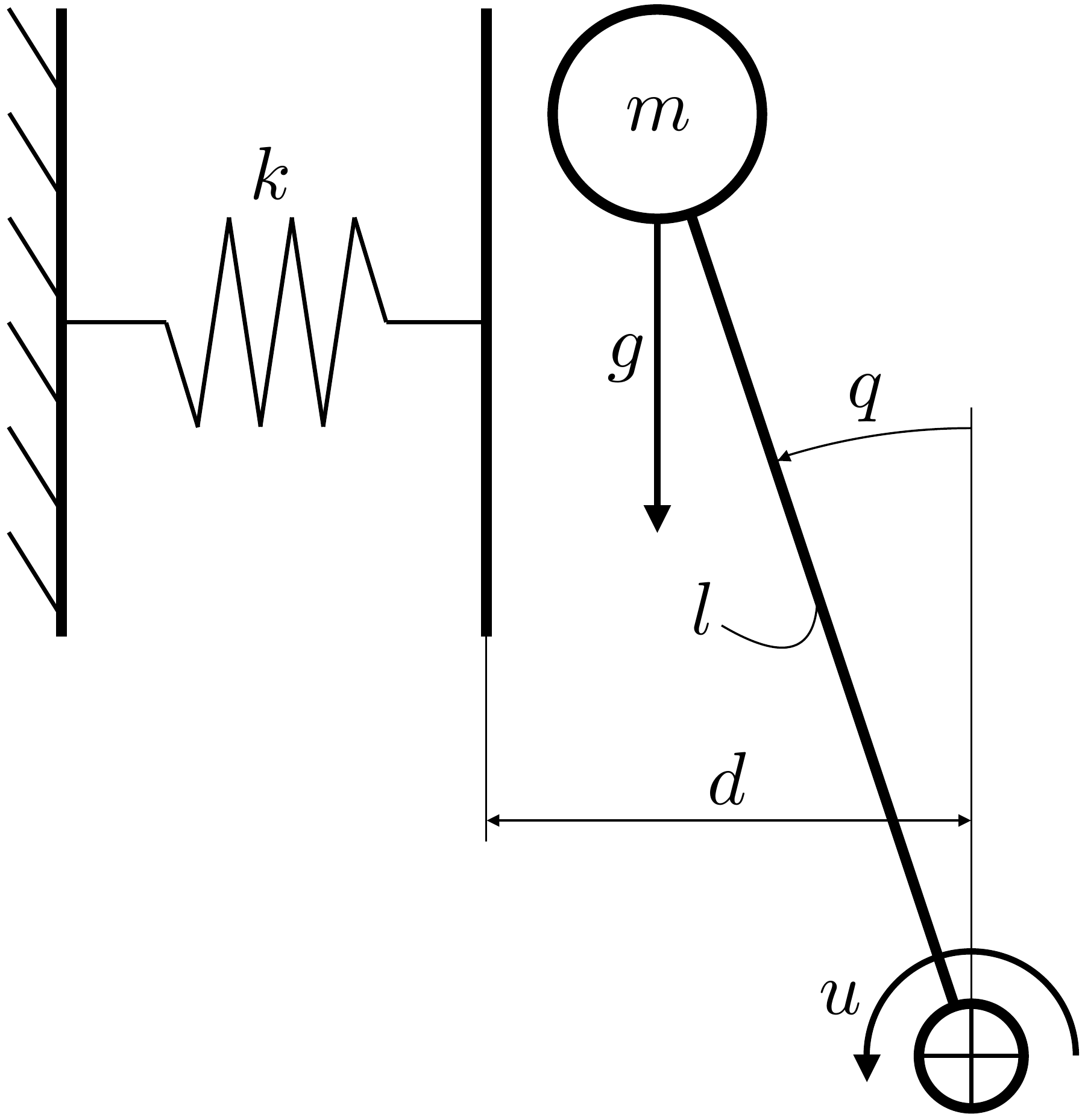}
		\caption{Inverted pendulum with an elastic wall~\cite{marcucci2017approximate}.}
		\label{fig:inverted_pendulum}
	\end{subfigure} \hfil
	\begin{subfigure}[t]{0.49 \textwidth}
		\centering
		\includegraphics[width =  \linewidth]{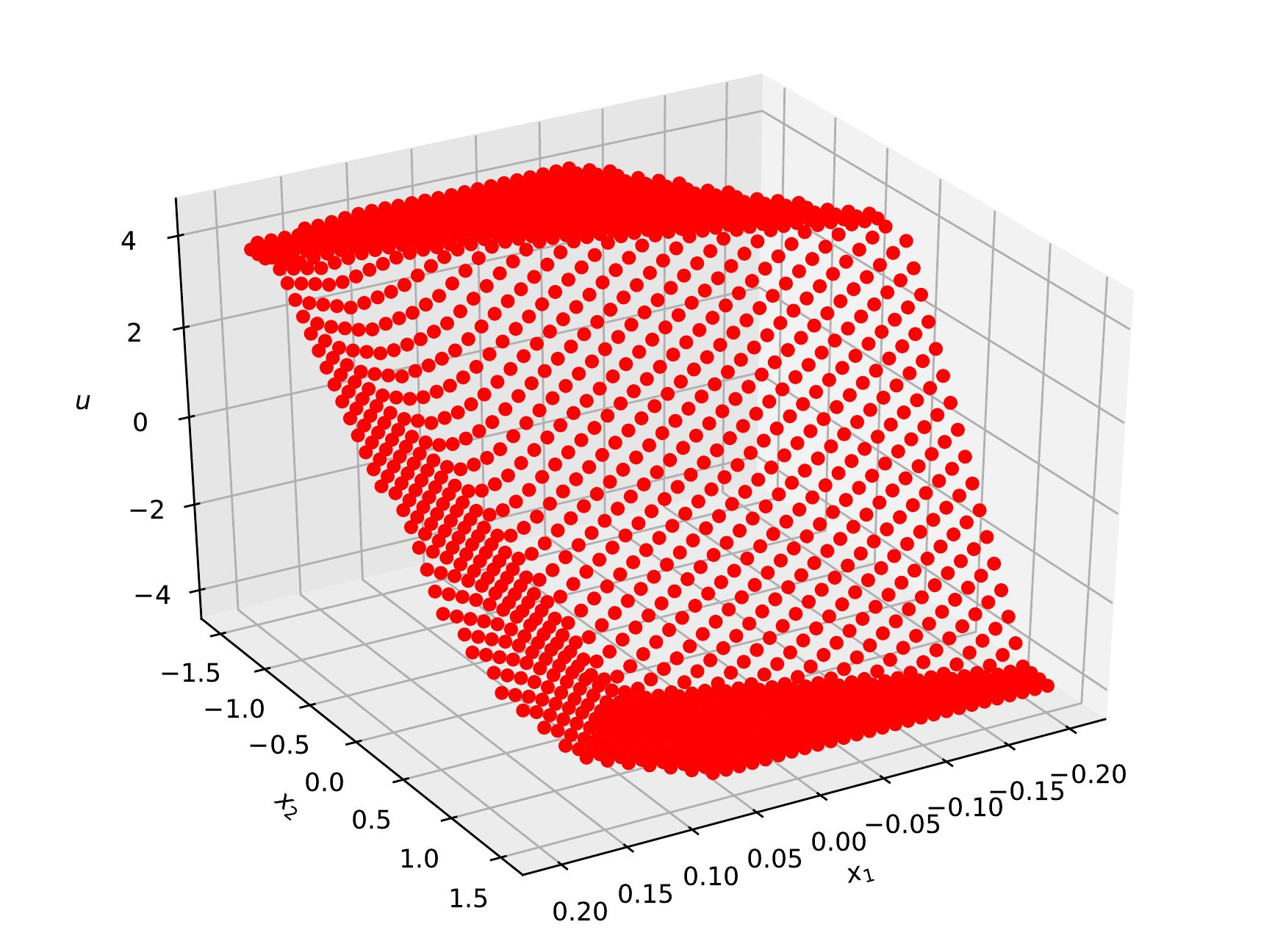}
		\caption{NN controller for the inverted pendulum system.}
		\label{fig:IP_nn_controller}
	\end{subfigure}
	\caption{A ReLU neural network (right) that approximates a hybrid MPC controller is applied on the inverted pendulum system with an elastic wall (left).}
\end{figure}

We then synthesize a hybrid MPC controller $\pi_{MPC}(x)$ for the PWA system through disjunctive programming~\cite{marcucci2019mixed} where the control input constraints are given by $u \in U = \{ u \in \mathbb{R} \vert -4 \leq u \leq 4 \}$ and the horizon of MPC is set as $T = 10$. The stage and terminal costs are given by 
\begin{equation*}
q(x_t, u_t) = x_t^\top Q x_t + u_t^\top R u_t, \quad p(x_T) = x_T^\top Q_T x_T
\end{equation*} 
where $Q = I, R = 1$, and $Q_T = \text{DARE}(A_1, B_1, Q, R)$ is the solution to the discrete algebraic Riccati equation defined by $(A_1, B_1, Q, R)$. We evaluate $\pi_{MPC}(x)$ on a uniform $40 \times 40$ grid samples from the state space and let the reference ROI $\mathcal{X}_0$ be the convex hull of all the feasible state samples. Then the ROI $\mathcal{X} = \gamma \mathcal{X}_0$ with $0 < \gamma \leq 1$ is applied to guide the search for an estimate of ROA.

A total number of $1354$ feasible samples of state and control input pairs $(x, \pi_{MPC}(x))$ are generated to train a ReLU neural network $\pi(x)$ in Keras to approximate the MPC controller. The NN has $2$ hidden layers with $20$ neurons in each layer and its output layer bias term is modified after training to guarantee $\pi(0) = 0$. The NN controller is shown in Fig.~\ref{fig:IP_nn_controller}. For the NN controller, we set $\epsilon = 0.0158$ and verify that the linear closed-loop dynamics inside $B_\epsilon$ is asymptotically stable.

With empty initial sample set $\mathcal{S}_0$, the ACCPM is run with both quadratic and PWQ Lyapunov function candidates in order to find a large estimate of ROA. With quadratic Lyapunov function candidates, the largest ROI is given by $\mathcal{X} = 0.81 \mathcal{X}_0$ through bisection on $\gamma$. The ACCPM terminates in $10$ iterations with total solver time $9.928$ seconds. The corresponding estimate of ROA is shown in Fig.~\ref{fig:IP_quad_ROA}. With PWQ Lyapunov function candidates, the largest ROI is given by $\mathcal{X} = 1.0 \mathcal{X}_0$ in which case the ACCPM terminates in $9$ iterations with total solver time $124.210$ seconds. The estimate of ROA obtained by the found PWQ Lyapunov function is shown in Fig.~\ref{fig:IP_PWQ_ROA}. It is observed that the PWQ Lyapunov function is less conservative compared with the quadratic one. In addition to the theoretical guarantees provided by the ACCPM, the validity of the estimate of ROA is also shown by the simulated closed-loop trajectories in Fig.~\ref{fig:IP_ROA}.

\begin{figure}
	\centering
	\begin{subfigure}{0.49\textwidth}
		\centering
		\includegraphics[width = 0.99 \linewidth]{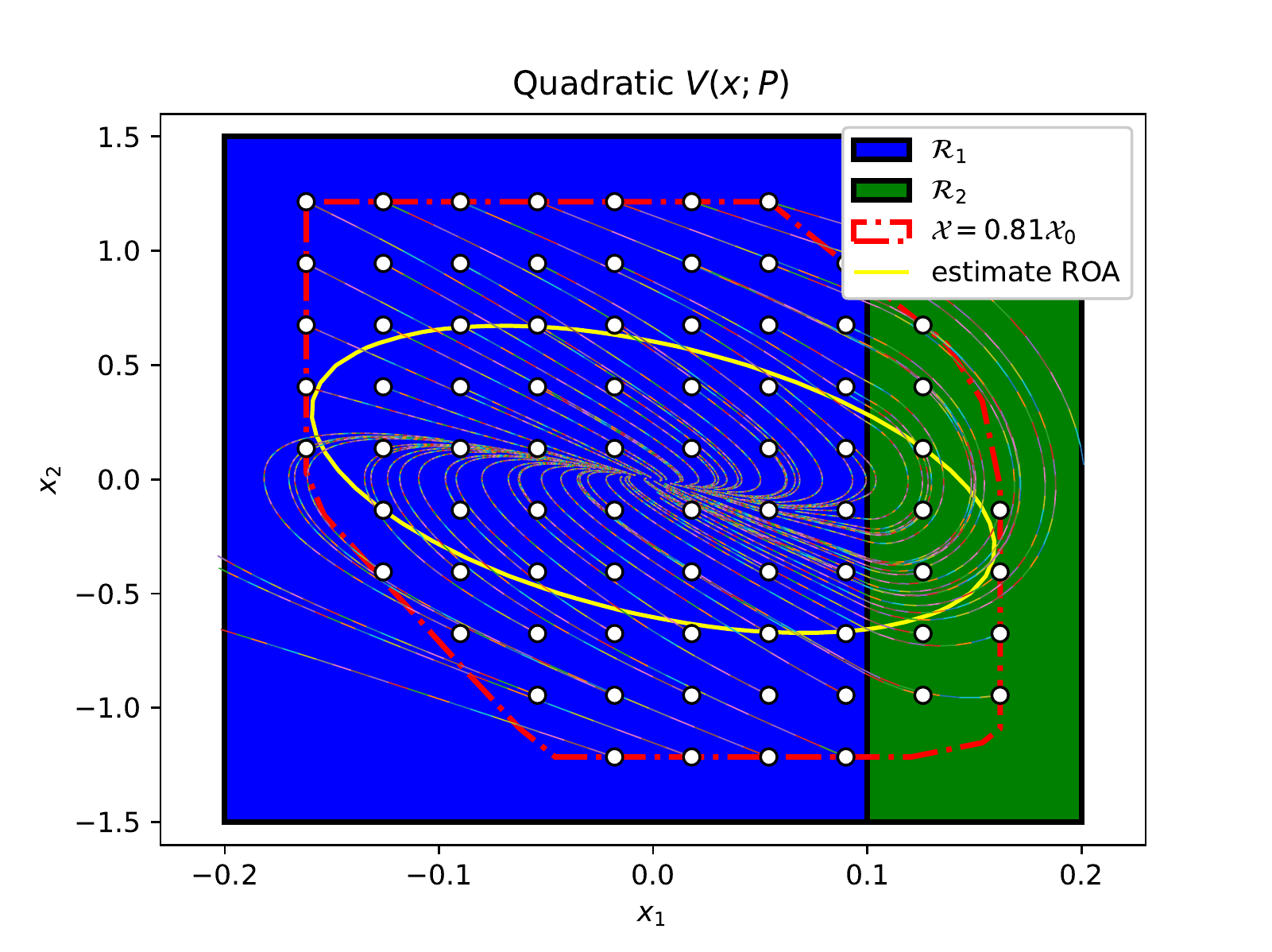}
		\caption{Estimate of ROA by a quadratic Lyapunov function. }
		\label{fig:IP_quad_ROA}
	\end{subfigure} \hfil
	\begin{subfigure}{0.49\textwidth}
		\centering
		\includegraphics[width = 0.99 \linewidth]{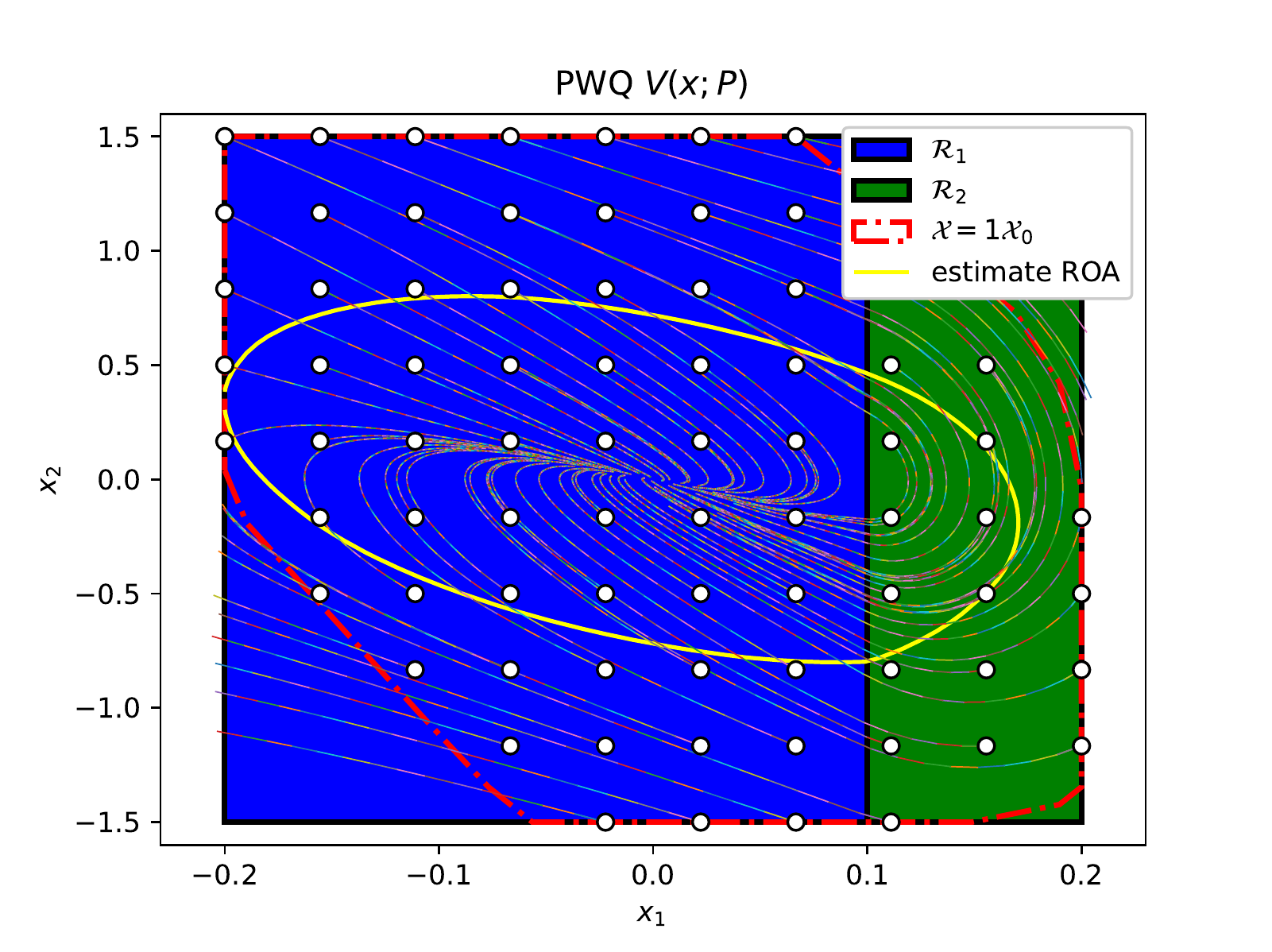}
		\caption{Estimate of ROA by a PWQ Lyapunov function. }
		\label{fig:IP_PWQ_ROA}
	\end{subfigure}
	\caption{Estimates of ROA found by the ACCPM with quadratic (left) and PWQ (right) Lyapunov functions. The partitions $\mathcal{R}_1$ and $\mathcal{R}_2$ are marked blue and green, respectively. Simulated closed-loop trajectories with the NN controller are plotted for a grid of initial conditions.}
	\label{fig:IP_ROA}
\end{figure}


\section{Conclusion}
\label{sec:conclusion}
In this paper, we have proposed an iterative algorithm that learns quadratic and piecewise quadratic Lyapunov functions for piecewise affine systems with ReLU neural network controllers. The proposed algorithm is composed of a learner and a verifier. The learner uses a cutting-plane strategy to propose Lyapunov function candidates from a set of sampled states of the closed-loop system, while the verifier either certifies the validity of the proposed Lyapunov function or rejects it with a counterexample to be accounted for by the learner in the next round. We provide finite-step termination guarantee for the overall algorithm when the set of Lyapunov functions is full-dimensional in the parameter space. Future work includes extending the proposed algorithm to stability analysis of neural-network-controlled uncertain systems. 

\small
\bibliography{reference}
\bibliographystyle{ieeetr}

\end{document}